\newtheorem{theorem}{Theorem}[section]
\newtheorem{lemma}[theorem]{Lemma}
\newtheorem{corollary}[theorem]{Corollary}
\theoremstyle{definition}
\numberwithin{equation}{section}
\newcommand{\Pp}{{\mathbb P}^2}
\newcommand{\claimproofend}{\hspace*{.1mm}\hspace{\fill}}
\newenvironment{tikzgraph}
  {\begin{tikzpicture}
      [vertex/.style={circle, draw=black, fill, inner sep=0mm, minimum size=3pt},edge/.style={semithick},
       subdivision/.style={circle, draw=black, fill=white, inner sep=0mm, minimum size=3pt},edge/.style={semithick}]\begin{scope}}
  {\end{scope}\end{tikzpicture}}
\begin{document}
\title{The width of quadrangulations of the projective plane}
\author{Louis Esperet}
\thanks{}
\address{Laboratoire G-SCOP (CNRS, Univ. Grenoble Alpes),
  Grenoble, France}
\email{louis.esperet@grenoble-inp.fr}
\author{Mat\v ej Stehl\'ik} \thanks{Partially supported by ANR project
  Stint under reference ANR-13-BS02-0007 and by the LabEx PERSYVAL-Lab
  (ANR--11-LABX-0025)} \address{Laboratoire G-SCOP, Univ.\ Grenoble Alpes, France}
\email{matej.stehlik@grenoble-inp.fr} \date{\today}

\begin{abstract}
  We show that every $4$-chromatic graph on $n$ vertices, with no two
  vertex-disjoint odd cycles, has an odd cycle of length at most
  $\tfrac12\,(1+\sqrt{8n-7})$. Let $G$ be a non-bipartite quadrangulation
  of the projective plane on $n$ vertices. Our result immediately implies
  that $G$ has edge-width at most $\tfrac12\,(1+\sqrt{8n-7})$, which is
  sharp for infinitely many values of $n$. We also show that $G$ has face-width
  (equivalently, contains an odd cycle transversal of cardinality) at most
  $\tfrac14(1+\sqrt{16 n-15})$, which is a constant away from the optimal; we
  prove a lower bound of $\sqrt{n}$. Finally, we show that $G$ has an odd
  cycle transversal of size at most $\sqrt{2\Delta n}$ inducing a single edge,
  where $\Delta$ is the maximum degree. This last result partially answers a
  question of Nakamoto and Ozeki.
\end{abstract}
\maketitle

\section{Introduction}
\label{sec:introduction}

Erd\H os~\cite{Erd74} asked whether there is a constant $c$ such that every
$n$-vertex $4$-chromatic graph has an odd cycle of length at most $c\sqrt n$.
Kierstead, Szemer\'edi and Trotter~\cite{KST84} proved the conjecture
with $c=8$, and the constant was gradually brought down to $c=2$~\cite{Jia01,Nil99}.
A natural question, asked by Ngoc and Tuza~\cite{NgoTuz95}, is to determine
the infimum of $c$ such that every $4$-chromatic graph on $n$ vertices has an
odd cycle of length at most $c\sqrt n$.

A construction due to Gallai~\cite{Gal63} shows that $c>1$, and this
was subsequently improved to $c>\sqrt 2$ by Ngoc and Tuza~\cite{NgoTuz95},
and independently by Youngs~\cite{You96}. The graphs they used---the so-called
\emph{generalized Mycielski graphs}---are a subclass of a rich family of graphs
known as \emph{non-bipartite projective quadrangulations}. These are graphs that
embed in the projective plane so that all faces are bounded by four edges, but
are not bipartite. This family of graphs plays an important role in the
study of the chromatic number of graphs on surfaces: it was shown by Youngs~\cite{You96} that all such
graphs are $4$-chromatic. Gimbel and Thomassen~\cite{GimTho97} later proved
that triangle-free projective-planar graphs are $3$-colorable if and
only if they do not contain a non-bipartite projective quadrangulation,
and used this to show that the $3$-colorability of triangle-free
projective-planar graphs can be decided in polynomial time.
Thomassen~\cite{Tho04} also used projective quadrangulations to give
negative answers to two questions of Bollob\'as~\cite{Bol78} about
$4$-chromatic graphs.

Let $G$ be a non-bipartite projective quadrangulation. A key
property of such a graph is that any cycle in $G$ is contractible on the
surface if and only if it has even length. In particular, the length of a
shortest odd cycle in $G$ is precisely the \emph{edge-width} of $G$,
the length of a shortest non-contractible cycle. Since any two
non-contractible closed curves on the projective plane intersect, it
also follows that $G$ does not contain two vertex-disjoint odd cycles.
The interest in the study of odd cycles in $4$-colorable graphs also
comes from the following question of Erd\H os~\cite{Erd68}: does every
$5$-chromatic $K_5$-free graph contain a pair of vertex-disjoint odd
cycles? Erd\H os's question may be rephrased as follows: is every
$K_5$-free graph without two vertex-disjoint odd cycles
$4$-colorable? This was answered in the affirmative by Brown and
Jung~\cite{BroJun69}. The non-bipartite projective quadrangulations
provide an infinite family of graphs showing that $4$ cannot be
replaced by $3$.  Note that Erd\H os's question was generalized by
Lov\'asz and became known as the Erd\H os--Lov\'asz Tihany
Conjecture. So far only a few cases of the conjecture have been
proved.

\smallskip

Our first theorem settles the problem of Ngoc and Tuza for the case of
$4$-chromatic graphs with no two vertex-disjoint odd cycles (and in
particular, for non-bipartite projective
quadrangulations).

\begin{theorem}\label{thm:oddcycle2}
Let $G$ be a $4$-chromatic graph on $n$ vertices without two
vertex-disjoint odd cycles. Then $G$ contains an odd cycle of length
at most $\tfrac12(1+\sqrt{8n-7})$.
\end{theorem}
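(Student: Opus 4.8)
The plan is to prove the equivalent statement that, writing $g$ for the length of a shortest odd cycle in $G$, one has $n \ge \binom{g}{2}+1$. Indeed, since $g$ is an integer, $g \le \tfrac12(1+\sqrt{8n-7})$ is equivalent to $(2g-1)^2 \le 8n-7$, i.e.\ to $n \ge \tfrac12(g^2-g+2) = \binom{g}{2}+1$. Writing $g=2k+1$, the bound $\binom{g}{2}+1 = gk+1$ is attained by the generalized Mycielski graphs: these consist of the cycle $C$ of length $g$, together with $k-1$ further ``levels'' of $g$ vertices each and a single apex, so that $n = gk+1$. Keeping this extremal configuration in mind tells us exactly how large each distance layer around $C$ must be.

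First I would reduce to the case where $G$ is itself a non-bipartite quadrangulation of $\RP^2$. Since $G$ has no two vertex-disjoint odd cycles, the structural characterization of such graphs (Lov\'asz) applies; and since $G$ is $4$-chromatic it cannot have an odd cycle transversal consisting of a single vertex $v$ (otherwise a proper $2$-colouring of the bipartite graph $G-v$ extends to a $3$-colouring of $G$). The remaining non-quadrangulation cases of the characterization (a transversal of two or three vertices, or one of a few sporadic graphs) force the odd girth to be small relative to $n$, so the desired inequality holds directly; alternatively one argues that $G$ contains a non-bipartite projective quadrangulation as a subgraph with the same odd girth and no more vertices, and applies the quadrangulation case to it. Pinning this reduction down cleanly is bookkeeping, but it is a genuine ingredient.

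For the quadrangulation case, fix a shortest odd cycle $C$ of length $g=2k+1$; by the properties recalled in the introduction, $C$ is a shortest non-contractible curve and every non-contractible cycle of $G$ has length at least $g$. Cut $\RP^2$ open along $C$ to obtain a quadrangulation $\hat D$ of the disk whose boundary is a $2g$-gon double-covering $C$: each of the $g$ vertices of $C$ appears twice on the boundary (at antipodal positions), while the remaining $n-g$ vertices lie in the interior, so $\hat D$ has $n+g$ vertices in total. A non-contractible cycle of $G$ corresponds to a path in $\hat D$ joining a boundary vertex to its antipode, so the length condition becomes: antipodal boundary vertices are at distance at least $g$ in $\hat D$. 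I would then run a breadth-first search from the whole boundary, letting $L_i$ be the set of vertices at distance $i$ from $\partial\hat D$, so that $|L_0|=2g$. The goal reduces to the two layer estimates $|L_i|\ge g$ for $1\le i\le k-1$ and $L_k\neq\varnothing$: summing these gives $n+g = |V(\hat D)| \ge 2g+(k-1)g+1 = \binom{g+1}{2}+1$, equivalently $n\ge\binom{g}{2}+1$.

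The main obstacle is precisely this isoperimetric control of the layers, which is a sharp discrete form of Pu's systolic inequality for the projective plane. The point is that each layer $L_i$ is a contractible ``concentric'' cycle separating the centre of $\hat D$ from its boundary, and the requirement that the $g$ antipodal pairs of boundary vertices be pairwise at distance at least $g$ forces this separating cycle to be long and forces the disk to have depth at least $k$. The natural route is to fix shortest paths $P_0,\dots,P_{g-1}$ joining the $g$ antipodal boundary pairs and to argue, via a parity/winding argument exploiting that $C$ is one-sided and $g$ is odd, that each such path must meet $L_i$ and that together they meet it in at least $g$ distinct vertices; controlling the overlaps between these geodesics so as to recover the exact constant $g$ (rather than merely $\Omega(g)$) is the delicate part, and is where the quadrangulation structure, through the rigidity of distances around a shortest odd cycle, has to be used in full.
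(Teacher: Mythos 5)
Your proposal has two genuine gaps, one in each half of the argument.

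\textbf{The reduction to quadrangulations is not just bookkeeping.} Lov\'asz's characterisation of graphs with no two vertex-disjoint odd cycles (as stated in the paper's Theorem~\ref{thm:disjoint-cycle}) applies only to \emph{internally $4$-connected} graphs, and you never address what happens when $G$ has a $2$- or $3$-separation. This is where the paper spends most of its effort: it runs an induction on $n$, analyses the separator $X$ with $|X|\in\{2,3\}$ using a precoloring-extension lemma for bipartite graphs (Lemma~\ref{lem:ext}), and in the hardest case replaces the bipartite side by a single new apex vertex to obtain a smaller $4$-chromatic graph with no two disjoint odd cycles to which induction applies --- checking that a short odd cycle through the apex lifts back to an equally short odd cycle of $G$. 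Your fallback, that $G$ contains a non-bipartite projective quadrangulation as a subgraph with the same odd girth, is false in general: in the $3$-separation case one side of the cut is bipartite and the ``quadrangulation structure'' only appears after contracting that side to a vertex, so it is not a subgraph of $G$. (You also misremember the characterisation slightly: the second outcome is three edges forming a triangle, not a transversal of two or three vertices; this happens not to matter since $G$ may be assumed triangle-free.)

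\textbf{The layer estimate is asserted, not proved, and it is the entire content of the quadrangulation case.} Your inequalities $|L_i|\ge g$ for $1\le i\le k-1$ and $L_k\neq\varnothing$ are a sharp discrete systolic statement, and you explicitly flag the control of overlaps among the antipodal geodesics as ``the delicate part'' without resolving it. Note in particular that BFS layers from the boundary of $\hat D$ need not be concentric cycles, and it is not even clear a priori that the disk has depth $k$ (i.e.\ that $L_k\neq\varnothing$): the hypothesis only bounds distances between antipodal boundary vertices, and paths between them may hug the boundary. This is precisely the work that the paper outsources to Lins's theorem (equivalently, the Okamura--Seymour theorem): $|E|\ge\ell\cdot\ell^*$ for the primal and dual non-contractible girths, combined with the observation that a non-contractible dual cycle of length $\ell^*$ yields a non-contractible closed walk in $G$ of length at most $\ell^*+1$, giving $\ell(\ell-1)\le 2n-2$ directly. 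Your cut-open-disk approach is a plausible alternative route to the same inequality, but as written the key step is missing, so the proof is incomplete.
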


Note that in the generalized Mycielski graphs found by Ngoc and Tuza~\cite{NgoTuz95},
and independently by Youngs~\cite{You96}, the shortest odd cycles have precisely this number
of vertices, so Theorem~\ref{thm:oddcycle2} is sharp for infinitely
many values of $n$.

\smallskip

An \emph{odd cycle transversal} in a graph $G$ is a set of vertices
$S$ such that $G-S$ is bipartite. Since any two odd cycles intersect
in a non-bipartite projective quadrangulation $G$, it follows that
any odd cycle in $G$ is also an odd cycle transversal of $G$. The following
slightly more general result holds. If $\gamma$ is a non-contractible
closed curve whose intersection with $G$ is a subset $S \subseteq V(G)$
(the minimum size of such a set $S$ is called the \emph{face-width} of $G$),
then $G-S$ is bipartite. It follows that the minimum size of an odd cycle
transversal of $G$ cannot exceed the face-width of $G$, and it can be proved
that the two parameters are indeed equal.

\Cref{thm:oddcycle2} immediately implies that a non-bipartite
projective quadrangulation on $n$ vertices has an odd cycle
transversal with at most $\tfrac12(1+\sqrt{8n-7}) \approx \sqrt{2n}$ vertices. Our next
theorem improves the bound to roughly $\sqrt{n}$.

\begin{theorem}
\label{thm:OCT-upper}
Let $G$ be a non-bipartite projective quadrangulation on $n$
vertices. Then $G$ has an odd cycle transversal of size at most
$\left\lfloor\tfrac14+\sqrt{n-\tfrac{15}{16}}\right\rfloor$.
\end{theorem}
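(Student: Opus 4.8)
The plan is to show directly that the face-width of $G$ cannot be large relative to $n$. Writing $w$ for the face-width of $G$ (equivalently, by the discussion preceding the statement, the minimum size of an odd cycle transversal), it suffices to prove the purely numerical lower bound $n \ge w^2 - \tfrac12 w + 1$: solving this quadratic inequality for $w$ returns exactly the claimed estimate $w \le \tfrac14 + \sqrt{n-\tfrac{15}{16}}$, since $w = \tfrac14 + \sqrt{n-\tfrac{15}{16}}$ is the positive root of $n = w^2 - \tfrac12 w + 1$ and the right-hand side is increasing in $w$. So the whole problem is reduced to an isoperimetric statement: a non-bipartite quadrangulation of $\RP^2$ with face-width $w$ must contain at least $w^2 - \tfrac12 w + 1$ vertices.

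To access the geometry I would first fix a shortest non-contractible curve $\gamma$ realizing the face-width, so that $\gamma$ meets $G$ in a set $S$ of exactly $w$ vertices (alternating with $w$ faces, each traversed between two opposite corners of a quadrilateral), and $G-S$ is bipartite. Cutting $\RP^2$ along $\gamma$ produces a disk in which $G$ induces a quadrangulation $Q$, all of whose internal faces are $4$-gons, and whose boundary is obtained by doubling $\gamma$ (each vertex of $S$ appearing twice). The non-bipartiteness of $G$ is encoded in how the two boundary copies of $S$ are identified, and the crucial structural input is that $\gamma$ is a \emph{shortest} such curve: this yields a systolic, no-shortcut property, namely that there is no non-contractible curve meeting $G$ in fewer than $w$ vertices, so no two regions of $Q$ can be glued to shorten $\gamma$.

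The heart of the argument is then a distance layering of $Q$ by graph distance from $S$ (equivalently from the boundary), $S = L_0, L_1, L_2, \dots$, together with the following two facts. First, the no-shortcut property forces the early layers to be "long": a layer that became too short would let $\gamma$ be rerouted through it to give a non-contractible curve of smaller face-width, contradicting minimality; this keeps the layers nonempty and growing linearly for on the order of $w$ steps. Second, the quadrangulation structure controls how fast a layer can shrink, via combinatorial Gauss--Bonnet: assigning to each vertex the combinatorial curvature $1-\tfrac14\deg$, the total over the disk is its Euler characteristic $1$, and the only vertices of positive curvature are those of degree at most $3$. Bounding the number of such "corner" vertices bounds the rate at which successive BFS layers can lose length, so that the layer lengths decrease slowly enough to accumulate a quadratic number of vertices. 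Summing the layer sizes then yields a triangular-number lower bound; carefully accounting for the two boundary sheets produced by cutting the one-sided curve $\gamma$ (and for the parity forced by the bipartition of $G-S$) upgrades the naive $\tfrac12 w^2$ count to the sharp $w^2 - \tfrac12 w + 1$.

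The main obstacle is precisely this last, quantitative step: obtaining the discrete isoperimetric inequality with the \emph{correct constant and additive terms}, rather than merely the right order of magnitude. A crude curvature count loses a factor of two and several linear terms, so the delicate part is to combine the systolic lower bounds on the individual layers with the Gauss--Bonnet upper bound on their rate of decay in a way that is tight enough, while correctly handling the boundary identification, the degree-$3$ corner vertices, and the parity constraints. I expect the remaining estimates (Euler's formula bookkeeping, the layer recursion, and the final inversion of the quadratic) to be routine once this sharp inequality is in place.
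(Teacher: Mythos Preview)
Your proposal is not a proof but a programme: you reduce the theorem to the sharp isoperimetric inequality $n \ge w^2 - \tfrac12 w + 1$ for a non-bipartite projective quadrangulation of face-width $w$, outline a BFS-layering/Gauss--Bonnet attack on that inequality, and then explicitly flag the sharp quantitative step as the ``main obstacle'' that you have not carried out. That is exactly where all the content lies. The layering heuristic you describe is plausible for obtaining $n = \Omega(w^2)$, but nothing in your sketch pins down the leading constant $1$ or the linear term $-\tfrac12 w$; your own caveat that ``a crude curvature count loses a factor of two and several linear terms'' is accurate, and you give no mechanism for recovering them. Since the theorem as stated is claimed to be within $\tfrac14$ of optimal, a bound with the wrong constant would not prove it. There are also technical soft spots you pass over: Gauss--Bonnet on the cut-open disk requires a boundary-curvature term, the boundary of the disk is not a simple $4$-cycle structure (each vertex of $S$ occurs twice and the incident faces on the two sheets interact), and the ``no-shortcut'' argument needs to be made precise for curves through interior layers, not just stated.

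The paper avoids all of this by invoking a theorem of Randby: any multigraph embedded in $\Pp$ that is \emph{minimal} of face-width $k$ (i.e., every edge-deletion and every edge-contraction drops the face-width) has exactly $2k^2-k$ edges. One then takes $G$, which has $2n-2$ edges by Euler's formula, and deletes/contracts edges down to a minimal minor $H$ of the same face-width $k$; since $H$ has at most $2n-2$ edges and exactly $2k^2-k$ edges, one gets $2k^2-k \le 2n-2$, which is your inequality $n \ge k^2 - \tfrac12 k + 1$. So the two routes target the same numerical inequality, but the paper delegates the hard combinatorics to Randby's structure theorem for face-width-minimal projective embeddings, whereas you are proposing to reprove (a consequence of) that theorem from scratch. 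If you want a self-contained argument along your lines, you should expect the work to be comparable to Randby's; as written, the key step is missing.
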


The next result shows that this is close to optimal.

\begin{theorem}
\label{thm:OCT-lower}
For any integer $k\ge 2$, there is a non-bipartite
  projective quadrangulations on $n=k^2$ vertices containing no odd cycle
  transversal of size less than $k=\sqrt n$.
\end{theorem}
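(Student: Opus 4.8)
The plan is to construct an explicit infinite family of non-bipartite projective quadrangulations on $n$ vertices whose every odd cycle transversal has size at least $\sqrt n$. I would look for a family with a large amount of symmetry so that the face-width and the structure of non-contractible curves can be computed by hand. The natural candidates are the \emph{generalized Mycielski graphs} already discussed in the introduction, or more generally quadrangulations built from a cycle by a ``stacking'' construction; these are the same graphs that make Theorem~\ref{thm:oddcycle2} sharp. Concretely, I would take the quadrangulation obtained from a $(2k+1)$-cycle by repeatedly quadrangulating, so that $G$ consists of roughly $k$ concentric ``levels'' each of size about $2k+1$, giving $n \approx k \cdot (2k+1)$ vertices; the odd girth (edge-width) is then $\Theta(k)$ but the face-width is $\Theta(k)$ as well, and one tunes the parameters so that the face-width is as close to $\sqrt n$ as possible.

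The key steps, in order, are as follows. First I would describe the embedding of the chosen graph $G$ in $\RP^2$ explicitly, identifying the antipodal structure and verifying that every face is a quadrilateral and that $G$ is non-bipartite (equivalently, that some cycle is non-contractible). Second, using the fact recalled in the introduction that in a non-bipartite projective quadrangulation the minimum odd cycle transversal equals the face-width, I would reduce the problem to bounding the face-width from below: it suffices to show that every non-contractible closed curve meets $G$ in at least $\sqrt n$ vertices. Third, I would prove this lower bound on the face-width by a ``levels'' or ``layers'' argument: a non-contractible curve must cross every one of the $\Theta(k)$ concentric levels, and by the antipodal symmetry it must in fact pass through vertices on each level, so it meets at least $\Theta(k) \approx \sqrt n$ vertices of $G$. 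Finally I would check the arithmetic, choosing the family so that the face-width is exactly at least $\sqrt n$ for infinitely many $n$.

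The main obstacle will be the lower bound on the face-width in the third step. Showing that a non-contractible curve must cross \emph{many} levels is easy from the topology, but converting ``crosses a level'' into ``passes through a vertex of $G$'' requires care, because a curve can in principle be routed through faces and cross edges rather than vertices. The correct object to count is the intersection of the curve with the vertex set, so I would instead work with the dual notion and argue combinatorially about the non-contractible cycles of $G$ themselves (which, being odd, have length at least the edge-width), rather than with arbitrary curves. The cleanest route is probably to exhibit the concentric levels as a sequence of disjoint non-contractible cycles, or to use a homological/parity argument showing that any vertex set whose removal makes $G$ bipartite must contain a vertex from each level, forcing size $\ge \sqrt n$. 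Pinning down exactly which classical family realizes the bound $\sqrt n$ and verifying the level-crossing count for that family is where the real work lies.
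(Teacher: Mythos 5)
There is a genuine gap, and it is in the very first step: the family you propose cannot work. A generalized Mycielski graph (or any ``stacked'' quadrangulation built over a $(2k+1)$-cycle with about $k$ concentric levels) has $n\approx k(2k+1)\approx 2k^2$ vertices but an odd cycle transversal of size only about $k+1$: take the apex together with one vertex from each level, chosen along a single ``radial'' line; this is the support of a non-contractible curve, so by Lemma~\ref{lem:parity} its removal leaves a bipartite graph. Hence the minimum transversal of these graphs is roughly $\sqrt{n/2}$, a factor $\sqrt{2}$ below the bound $\sqrt{n}$ claimed in the theorem. No amount of parameter tuning fixes this: the levels have odd size $2k+1$ while only about $k$ levels are needed, so the construction is intrinsically ``wasteful''. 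These graphs are extremal for the edge-width bound of Theorem~\ref{thm:oddcycle2}, not for the face-width/transversal bound, and conflating the two is the error. The paper instead uses the projective grid $G_k$, i.e.\ $P_k\Box P_k$ with antipodally identified boundary, which has exactly $n=k^2$ vertices and no transversal of size less than $k=\sqrt{n}$.

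Beyond the wrong family, your proposal also leaves the lower bound itself unproved: you correctly identify that converting ``a non-contractible curve crosses each level'' into ``the transversal meets each level'' is the hard part, but you only sketch possible routes and acknowledge that ``the real work'' remains. The paper's argument for $G_k$ is a short double pigeonhole that avoids topology entirely: a transversal $T$ with $|T|\le k-1$ must miss one of the $k$ disjoint rows, say $R_\ell$, and must also miss one of $k$ disjoint ``bent columns'' $L_m$ (a column below row $\ell$ continued antipodally above it); the union of $L_m$ with the segment of $R_\ell$ between columns $m$ and $k-m-1$ then contains an odd cycle avoiding $T$. If you want to salvage your approach, replace your family with $G_k$ and replace the face-width argument with this direct combinatorial one.
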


It can be checked that for any integer $k\ge 2$,
$\left\lfloor\tfrac14+\sqrt{k^2-\tfrac{15}{16}}\right\rfloor=k$, so this shows that
Theorem~\ref{thm:OCT-upper} is sharp for infinitely many values of $n$.

\smallskip

Nakamoto and Ozeki~\cite{NO17} have asked whether every
$n$-vertex non-bipartite projective quadrangulation can be
$4$-colored so that one color class has size $1$ and another has
size $o(n)$. While we were unable to answer their question in general,
our final theorem gives a positive answer when the maximum degree is
$o(n)$.

\begin{theorem}\label{thm:sqrtD}
Let $G$ be a non-bipartite projective quadrangulation on $n$
vertices, with maximum degree $\Delta$. There exists an odd cycle
transversal of size less than $\sqrt{2\Delta n}$ inducing a single edge.
\end{theorem}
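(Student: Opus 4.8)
The plan is to first recast the statement in colouring terms. A set $S$ with $G[S]$ a single edge $ab$ is exactly a singleton $\{a\}$ together with the independent set $S\setminus\{a\}$; so, together with the requirement that $G-S$ be bipartite, the theorem asks for a proper $4$-colouring of $G$ in which one class is $\{a\}$, one class $J=S\setminus\{a\}$ is independent (the ``defect''), and the remaining two classes $2$-colour $G-S$. Equivalently, I want a vertex $u$ and an independent set $J$ with $G-(\{u\}\cup J)$ bipartite, with $u$ having a \emph{unique} neighbour in $J$, and with $1+|J|\le\sqrt{2\Delta n}$; this is just a proper $3$-colouring of $G-u$ whose ``defect'' class $J$ is small and meets $N(u)$ exactly once. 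This reformulation is precisely the Nakamoto--Ozeki form, and it isolates the two things to control: smallness of $J$, and the single induced edge at $u$.

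The tool I would use to locate the odd cycles is the antipodal double cover $\pi\colon\widetilde G\to G$. Since $G$ is a non-bipartite quadrangulation of $\RP^2$, the cover $\widetilde G$ is a \emph{bipartite} planar quadrangulation carrying a free involution $\sigma$ that swaps the two parts (if $\sigma$ fixed the bipartition it would descend to $G$). Fix a lift $\widetilde u$ of $u$, put $f=\dist_{\widetilde G}(\widetilde u,\cdot)$, and consider the antisymmetric potential $h=f-f\circ\sigma$. Because $\widetilde G$ is bipartite, $f$ changes by exactly $1$ along every edge, the BFS levels $f^{-1}(i)$ are independent, and $h$ is odd-valued (hence never $0$), satisfies $h\circ\sigma=-h$, and varies by at most $2$ across each edge. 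Consequently $\Lambda_1:=\pi\bigl(h^{-1}\{\pm1\}\bigr)$ is an odd cycle transversal: after deleting it the $f$-parity colouring descends to a proper $2$-colouring of $G-\Lambda_1$, since the only monochromatic edges sit where $h$ jumps from $+1$ to $-1$. This ``equatorial'' set is where all odd (equivalently, non-contractible) cycles are forced to pass, and it is essentially the face-width cut of \Cref{thm:OCT-upper}.

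To get down to the target $\lvert S\rvert^2\le 2\Delta n$ I would combine two quantitative inputs. The first is a growth estimate for quadrangulations: a ball of radius $r$ contains $O(\Delta r^2)$ vertices, because each BFS level grows by at most about $\Delta$ relative to the previous one, so the eccentricity $d$ of $u$ obeys $n\le 2\Delta d^2$, i.e.\ $d\ge\sqrt{n/2\Delta}$; hence some BFS level has at most $n/d\le\sqrt{2\Delta n}$ vertices. The second is \Cref{thm:oddcycle2}, which lets me take $u$ on a shortest odd cycle $C$ of length at most $\tfrac12(1+\sqrt{8n-7})$. The idea is then to keep a single edge of $C$ incident to $u$ as the one induced edge, and to read the independent defect class $J$ off a sparse level, recolouring along it so that $G-(\{u\}\cup J)$ is $2$-coloured and $u$ sees $J$ exactly once.

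The hard part, and the main obstacle, is the single-edge requirement. A sparse transversal like $\Lambda_1$ induces roughly a cycle, not one edge, and thinning the equatorial cut down to an independent set while \emph{keeping the complement bipartite} is exactly where the factor $\sqrt\Delta$ is paid: near a high-degree vertex one is forced to absorb up to $\Delta$ of its neighbours into the transversal to avoid reintroducing an odd cycle, which is what inflates the $\sqrt n$ face-width bound to $\sqrt{2\Delta n}$. Making this absorption always possible---equivalently, choosing $u$ so that $G-u$ is genuinely $3$-colourable with a small class touching $u$ once, rather than merely $4$-chromatic---is the technical heart of the argument, and I expect it to require a careful case analysis of the level structure of $\widetilde G$ around the equator $h^{-1}\{\pm1\}$.
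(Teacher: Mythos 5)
There is a genuine gap: your outline explicitly stops short of the theorem's actual content. You identify the single-edge requirement as ``the technical heart'' and say you ``expect it to require a careful case analysis,'' but you never carry that analysis out. The equatorial set $\Lambda_1=\pi\bigl(h^{-1}\{\pm1\}\bigr)$ you construct induces, as you note yourself, roughly a non-contractible cycle rather than a single edge, and no mechanism is given for thinning it while preserving bipartiteness of the complement. The paper resolves exactly this point with a dichotomy on the edge-width $\ell$. If $\ell\le\sqrt{2n/\Delta}$, a shortest odd cycle $C$ is replaced by a support set of order $1$ contained in $\bigcup_{v\in C}N(v)$ (\Cref{lem:neighborhood}), which after the cleanup of \Cref{lem:shortest} induces a single edge; this is where the factor $\Delta$ is paid, via $|T|\le\ell\Delta$. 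If $\ell\ge\sqrt{2n/\Delta}$, Lins' inequality $\ell\cdot\ell^*\le 2n-2$ (\Cref{lem:lins}) yields a non-contractible dual cycle of length less than $2n/\ell\le\sqrt{2\Delta n}$, from which one reads off a support set in which all consecutive pairs but one are \emph{opposite} (rather than adjacent), so it already has order $1$ and \Cref{lem:shortest,lem:parity} finish. Your outline engages with neither half of this trade-off.

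The quantitative part of your sketch is also unsupported. The claim that a ball of radius $r$ in $G$ has $O(\Delta r^2)$ vertices, hence that the eccentricity satisfies $d\ge\sqrt{n/2\Delta}$, rests on the assertion that ``each BFS level grows by at most about $\Delta$ relative to the previous one''; in a graph of maximum degree $\Delta$ a level can be $(\Delta-1)$ times the size of the previous one, so without a further structural argument the ball bound is exponential, not quadratic, and you give no such argument. Even granting it, the pigeonhole bound $n/d$ applies to a BFS level of $G$, whereas the set that must be small is $h^{-1}\{\pm1\}$ in the double cover, which is not a BFS level and whose size you never relate to the levels. The correct ``isoperimetric'' input here is not volume growth from an arbitrary basepoint but the primal--dual width duality of Lins (equivalently Okamura--Seymour), which ties the size of the transversal to the odd girth $\ell$ and is what makes both cases of the dichotomy close. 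As written, the proposal is a plausible research plan, not a proof.
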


The rest of the paper is organized as follows. In
\Cref{sec:preliminaries} we introduce the necessary terminology and
prove a number of lemmas that will be used later.  In
\Cref{sec:oddcycle} we prove \Cref{thm:oddcycle2} using a theorem of
Lins on graphs embedded in the projective plane (an equivalent form of
the Okamura-Seymour theorem~\cite{OS81} in Combinatorial Optimisation), and the
Two Disjoint Odd Cycles Theorem of Lov\'asz (see~\cite{KawOze13,Sey95}).  In \Cref{sec:OCT} we
prove \Cref{thm:OCT-upper} using a theorem of Randby~\cite{Ran97}, and
then show that a similar result holds for any $4$-vertex-critical graph in
which any two odd cycles intersect. As a consequence of
\Cref{thm:OCT-upper}, we also deduce an (almost tight) lower bound on the independence
number of non-bipartite projective quadrangulations.  In
Section~\ref{sec:jap}, we prove Theorem~\ref{thm:sqrtD} and finally,
we conclude with some open problems in Section~\ref{sec:conclusion}.

\section{preliminaries}
\label{sec:preliminaries}

Our graph theoretic terminology is standard, and follows Bondy and
Murty~\cite{BonMur08}. For the notions from algebraic topology, the
reader is referred to~\cite{Mun84}.

We denote the (real) projective plane by $\Pp$.
We will use the following properties of the projective plane, which
can be proved using basic algebraic topology. Namely, every simple closed
curve $\gamma:[0,1] \to \Pp$ is either nullhomotopic or non-contractible,
and two non-contractible essential simple closed curves intersect transversally
an odd number of times.

Given a non-bipartite projective quadrangulation $G$, it is not hard to show
(see e.g. ~\cite[Lemma 3.1]{KaiSte15}) that all contractible closed walks
in $G$ have even length, and all non-contractible closed walks in $G$ have
odd length.

Given a quadrangulation $G=(V,E)$ of the projective plane $\Pp$, a
\emph{support set} $S$ is a circularly ordered subset of $V$ (with
repetitions allowed), such that any two consecutive vertices of $S$
are on a common face of $G$. Since $G$ is a quadrangulation, it
follows that two distinct consecutive vertices of $S$ are either adjacent or
have a common neighbor (in this case, we say that they are
\emph{opposite}). The \emph{size} of $S$ is the number of pairs of
consecutive vertices of $S$, the \emph{order} of $S$ is the number of
pairs of consecutive vertices of $S$ that are adjacent in $G$, and the
\emph{parity} of $S$ is the parity of the order of $S$. Note that
to any support set $S$ of $G$ we can associate a closed curve of $\Pp$
meeting $G$ in $S$, and encountering the vertices of $S$ in their
circular order. We denote such a curve by $\rho(S)$. Conversely, to
any curve $\rho$ meeting $G$ in a subset $S\subseteq V$ we can
associate a support set $S(\rho)$ whose circular order coincides with
the order in which $\rho$ visits the vertices of $S$.

Given a support set $S$ of $G$, and two consecutive vertices
$v_i,v_{i+1}$ of $S$ that are opposite, a \emph{shift} in $S$ at
$(v_i,v_{i+1})$ is the support set obtained from $S$ by adding a
common neighbor $u_i$ of $v_i$ and $v_{i+1}$ between $v_i$ and
$v_{i+1}$ in the support set ($u_i$ may be chosen arbitrarily among
the common neighbors of $v_i$ and $v_{i+1}$ in $G$). Observe that
replacing a pair of opposite vertices by two pairs of adjacent
vertices does not change the parity of the support set. Moreover, if
$S'$ is obtained from $S$ by a shift, then $\rho(S)$ and $\rho(S')$
are homologous. For convenience, we write it as a lemma.

\begin{lemma}\label{lem:shift}
Let $G$ be a non-bipartite quadrangulation of $\Pp$ and $S$ a
support set of $G$. If a support set $S'$ is obtained from $S$ by a
sequence of shifts, then $S$ and $S'$ have the same parity and
$\rho(S)$ and $\rho(S')$ are homologous.
\end{lemma}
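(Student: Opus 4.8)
The plan is to verify the two assertions of \Cref{lem:shift} separately, reducing everything to the case of a single shift and then iterating along the given sequence. Let me first set up the single-shift case: suppose $S'$ is obtained from $S$ by one shift at an opposite consecutive pair $(v_i,v_{i+1})$, inserting a common neighbor $u_i$ between them. I would observe that the shift leaves every other pair of consecutive vertices of $S$ untouched, so the only change to the order (the number of adjacent consecutive pairs) comes from replacing the single opposite pair $(v_i,v_{i+1})$ by the two pairs $(v_i,u_i)$ and $(u_i,v_{i+1})$.

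The parity claim is the easy half. The replaced pair $(v_i,v_{i+1})$ is opposite, hence contributes $0$ to the order, while both new pairs $(v_i,u_i)$ and $(u_i,v_{i+1})$ are adjacent (since $u_i$ is a common neighbor of $v_i$ and $v_{i+1}$), contributing $2$ to the order. Thus a single shift increases the order by exactly $2$, leaving its parity—and hence the parity of $S$—unchanged. Since parity is preserved by each individual shift, it is preserved by any finite sequence of them, giving the first conclusion. (I note in passing that the lemma statement writes ``$S'$ and $S'$''; the intended claim is that $S$ and $S'$ have the same parity, which is what this argument establishes.)

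For the homology claim, the key point is that inserting $u_i$ between $v_i$ and $v_{i+1}$ only alters the curve $\rho(S)$ locally, within a neighborhood of the face (or the two faces) witnessing that $v_i$ and $v_{i+1}$ are opposite. Because $v_i$ and $v_{i+1}$ are opposite, they share a common neighbor $u_i$ inside a quadrilateral face $v_i u_i v_{i+1} w$ of $G$; the segment of $\rho(S)$ crossing directly from $v_i$ to $v_{i+1}$ and the two segments of $\rho(S')$ passing through $u_i$ together bound a disk contained in (a neighborhood of) this face, so the symmetric difference of the two closed curves is a nullhomotopic loop. I would phrase this as: $\rho(S')$ is obtained from $\rho(S)$ by a homotopy supported in a disk neighborhood of the relevant face, whence the two curves are homotopic, and in particular homologous. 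Iterating over the sequence of shifts, and using transitivity of the homology relation, yields that $\rho(S)$ and $\rho(S')$ are homologous.

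The main obstacle, and the one point deserving genuine care rather than routine bookkeeping, is making the local homotopy argument for homology precise in the degenerate situations: when $v_i$ and $v_{i+1}$ are opposite they may have several common neighbors, and the relevant face structure must be pinned down so that the inserted arc genuinely bounds a disk with the original arc. I would handle this by working entirely inside a closed disk neighborhood of the chosen face $v_i u_i v_{i+1} w$ (which is a disk since the face is a quadrilateral, independently of the global topology of $\Pp$), observing that within a disk any two arcs with the same endpoints are homotopic rel endpoints; this localizes the argument and avoids any interaction with the non-orientability or the non-contractible curves of $\Pp$. Once the single-shift case is established in this disk, the extension to a sequence of shifts is immediate by induction on the number of shifts.
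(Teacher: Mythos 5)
The paper does not actually write out a proof of \Cref{lem:shift}: it records the two assertions as observations immediately before the statement, so your write-up is best judged on its own. Your parity argument is correct and is exactly the intended one: a single shift replaces one opposite pair (contributing $0$ to the order) by two adjacent pairs (contributing $2$), so the order increases by exactly $2$ and its parity is unchanged; induction on the number of shifts finishes it. You are also right that ``$S'$ and $S'$'' in the statement is a typo for ``$S$ and $S'$''.

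The one step that does not hold as written is in the homology half, precisely at the point you flag as delicate. You assume that the inserted vertex $u_i$ lies on a quadrilateral face $v_iu_iv_{i+1}w$, so that the old arc and the new arc cobound a disk inside a neighborhood of that face. But the paper's definition of a shift explicitly allows $u_i$ to be \emph{any} common neighbor of $v_i$ and $v_{i+1}$ in $G$, not only one lying on the face witnessing that $v_i$ and $v_{i+1}$ are opposite. In a quadrangulation two opposite vertices of a face can perfectly well have a further common neighbor $u_i$ such that $v_i,u_i,v_{i+1}$ do not lie on a common face; then the arc of $\rho(S')$ through $u_i$ runs through faces incident to the edges $v_iu_i$ and $u_iv_{i+1}$ that may be far from the original face, and the modification is not supported in a disk, so the local homotopy argument fails. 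The correct general argument is global but short: the loop obtained by concatenating the new arc $v_i\to u_i\to v_{i+1}$ with the reverse of the old arc is freely homotopic to the closed walk $v_i,u_i,v_{i+1},w,v_i$ in $G$ (where $w$ is a vertex of the common face adjacent to both $v_i$ and $v_{i+1}$); this closed walk has length $4$, and in a non-bipartite quadrangulation of $\Pp$ every closed walk of even length is contractible (the fact recalled in \Cref{sec:preliminaries}). Hence the difference loop is nullhomotopic and $\rho(S)$, $\rho(S')$ are homologous. With this substitution your proof is complete; note that the non-bipartiteness hypothesis, which your disk argument never uses, genuinely enters here.
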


Recall that a closed walk
in a non-bipartite quadrangulation of $\Pp$ is odd if and only if it
is non-contractible. The next lemma shows a similar result for
support sets.

\begin{lemma}\label{lem:parity}
If $G$ is a non-bipartite quadrangulation of $\Pp$ and $S$ a
support set of $G$, then $S$ is odd if and only if $\rho(S)$ is
non-contractible. In particular, if $S$ is odd, then $G-S$ is
bipartite.
\end{lemma}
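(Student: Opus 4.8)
The plan is to establish the two directions of the biconditional and then deduce the bipartiteness consequence. First I would prove that if $\rho(S)$ is contractible then $S$ is even, and conversely. The natural tool is \Cref{lem:shift}: by performing a sequence of shifts at every pair of consecutive opposite vertices, I can transform $S$ into a support set $S'$ in which all consecutive pairs are adjacent, i.e.\ a genuine closed walk in $G$. By the lemma, $S'$ has the same parity as $S$ and $\rho(S')$ is homologous to $\rho(S)$. Since every consecutive pair in $S'$ is an edge, the order of $S'$ equals its size, and the parity of $S'$ is exactly the parity of the length of the closed walk $W$ traced by $S'$ in $G$. Thus the problem reduces to a statement about honest closed walks.

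At this point I would invoke the fact recalled just before the lemma: in a non-bipartite quadrangulation of $\Pp$, a closed walk is odd if and only if it is non-contractible (equivalently, non-nullhomotopic). So $W$ is odd precisely when $\rho(S')=\rho(W)$ is non-contractible. Combining this with the conclusions of \Cref{lem:shift}, I get the chain: $S$ is odd $\iff$ $S'$ is odd $\iff$ $W$ is odd $\iff$ $\rho(S')$ is non-contractible $\iff$ $\rho(S)$ is non-contractible, where the last step uses that homologous curves on $\Pp$ have the same homology class, and on the projective plane a simple (or even closed) curve is non-contractible exactly when it is non-trivial in homology. This settles the biconditional.

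For the final sentence, suppose $S$ is odd, so by the biconditional $\gamma:=\rho(S)$ is non-contractible. I want to conclude $G-S$ is bipartite. The idea is that any odd cycle $C$ in $G$ is itself non-contractible, and any non-contractible closed curve must cross $\gamma$ an odd (in particular nonzero) number of times, by the intersection property of non-contractible curves on $\Pp$ stated in the preliminaries. Hence $C$ must meet $\gamma$, and since $\gamma$ meets $G$ only in $S$, the cycle $C$ must use a vertex of $S$. Therefore $G-S$ contains no odd cycle, i.e.\ it is bipartite.

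The main obstacle I expect is the careful topological bookkeeping in the crossing argument: I need $\gamma=\rho(S)$ to be a simple closed curve (or at least to have a well-defined parity of intersection with other closed curves) so that the odd-intersection property for non-contractible curves applies, and I must ensure that an odd cycle $C$ of $G$, viewed as a closed curve, can be put in general position with $\gamma$ so that its intersections with $\gamma$ occur precisely at vertices of $S$. Justifying that $C$ cannot avoid $S$ entirely — rather than merely that the algebraic intersection number is odd — is the delicate point; I would handle it by arguing that if $C$ avoided all vertices of $S$, then $C$ and $\gamma$ could be realized as disjoint closed curves, contradicting the fact that two non-contractible curves on $\Pp$ must intersect. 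Everything else is a routine application of \Cref{lem:shift} and the parity characterization of closed walks.
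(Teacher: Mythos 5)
Your proof of the biconditional is essentially identical to the paper's: shift away all opposite pairs to obtain a closed walk of the same parity with a homologous associated curve, then invoke the fact that closed walks in a non-bipartite quadrangulation of $\Pp$ are odd exactly when non-contractible, together with the coincidence of homology and homotopy classes on $\Pp$. Where you diverge is the final claim that $G-S$ is bipartite. The paper's argument is to cut $\Pp$ along the non-contractible curve $\rho(S)$: the removal of $S$ leaves a graph embedded in the plane all of whose inner faces have even length, hence a bipartite graph. You instead argue that any odd cycle of $G$ is non-contractible, that two non-contractible closed curves on $\Pp$ must intersect, and that since $\rho(S)$ meets $G$ only in $S$, an odd cycle avoiding $S$ would be a non-contractible curve disjoint from $\rho(S)$ --- a contradiction. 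Both routes are sound. Your intersection argument is arguably more self-contained (it uses only the topological facts already recorded in the preliminaries and the odd-iff-non-contractible property of cycles), and you correctly flag the one delicate point, namely that the curves need only be disjoint rather than in general position for the contradiction with the odd-intersection property to go through (this is fine since on $\Pp$ nullhomologous and nullhomotopic coincide). The paper's cutting argument buys a slightly stronger, more constructive statement --- it exhibits the bipartition of $G-S$ directly from the planar embedding with even faces --- but for the purposes of this lemma the two are interchangeable.
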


\begin{proof}
 For any two consecutive vertices $v_i$ and $v_{i+1}$ of $S$ that are
 opposite, we make a shift at $(v_i,v_{i+1})$. Let $S'$ be the support
 set thus obtained. By Lemma~\ref{lem:shift}, $S$ and $S'$ have the
 same parity and $\rho(S)$ and $\rho(S')$ are homologous. By the
 definition of $S'$, any two consecutive vertices are adjacent. It
 follows that $S'$ is a closed walk in $G$. Since a closed walk in a
 non-bipartite quadrangulation of $\Pp$ is odd if and only if it is
 non-contractible, $S$ is odd if and only if $\rho(S)$ is
 non-contractible.

 Assume now that $S$ is odd, and so $\rho(S)$ is non-contractible. Then
 the removal of $S$ yields a graph embedded in the plane, with all inner
 faces bounded by an even number of edges. Therefore, $G-S$ is bipartite.
\end{proof}

\begin{lemma}\label{lem:shortest}
Let $G$ be a non-bipartite quadrangulation of $\Pp$ and $S$ an odd
support set of $G$. Then there is an odd support set $S' \subseteq S$,
with order at most the order of $S$, such that the vertices of $S'$
are pairwise distinct, and two vertices of $S'$ are adjacent or
opposite if and only if they are consecutive.
\end{lemma}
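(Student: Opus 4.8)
The plan is to take $S'$ to be a \emph{smallest} odd support set sitting inside $S$, and to show that smallness forces both the absence of repeated vertices and the absence of non-consecutive adjacent/opposite pairs. Concretely, I would consider the family of all support sets $T \subseteq S$ (that is, $T$ is a circular subsequence of $S$ whose consecutive pairs are again adjacent or opposite) that are odd and satisfy $\operatorname{order}(T) \le \operatorname{order}(S)$. Since $S$ itself belongs to this family, it is nonempty, so I may pick $S'$ in it minimizing the size, i.e.\ the number of consecutive pairs. The bound $\operatorname{order}(S') \le \operatorname{order}(S)$ is then immediate by construction, and it remains to verify the two structural properties.

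For those properties the key move is to \emph{split} $S'$ at a bad pair. Suppose two vertices $v_i,v_j$ of $S'$ coincide, or are adjacent or opposite while being non-consecutive. Cutting the circular sequence of $S'$ at $v_i$ and $v_j$ produces two arcs; closing each arc (trivially if $v_i=v_j$, and otherwise by the pair $(v_i,v_j)$, which is a legitimate consecutive pair precisely because $v_i,v_j$ are adjacent or opposite) yields two support sets $S_1,S_2 \subseteq S$, each of strictly smaller size than $S'$, since each omits at least one interior vertex of the other arc. I would then argue that exactly one of $S_1,S_2$ is odd and that this piece still has order at most $\operatorname{order}(S)$; this contradicts the minimality of $S'$ and so rules out every bad pair at once.

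The parity bookkeeping is where the argument really lives, and it is the step I expect to be the main obstacle. Writing $a$ and $b$ for the numbers of adjacent consecutive pairs on the two arcs, one has $\operatorname{order}(S')=a+b$, while $\operatorname{order}(S_1),\operatorname{order}(S_2)$ equal $a,b$ when the closing pair is opposite (or absent, in the repeated-vertex case) and $a+1,b+1$ when it is an edge. In all cases $\operatorname{order}(S_1)+\operatorname{order}(S_2)\equiv\operatorname{order}(S')\pmod 2$, which is odd, so exactly one piece has odd order and is therefore an odd support set---matching the topological fact that $[\rho(S')]=[\rho(S_1)]+[\rho(S_2)]$ is the nontrivial class of $H_1(\Pp;\ZZ/2)$, so exactly one summand is nonzero. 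The delicate point is the edge case, where the added edge could a priori raise the order of the odd piece above $\operatorname{order}(S')$. But since $a+b=\operatorname{order}(S')$ is odd, the odd piece is exactly the one whose retained arc carries an \emph{even} number of edges, so its order equals that even count plus $1$; as the complementary (odd) arc-count is at least $1$, this is at most $a+b=\operatorname{order}(S')\le\operatorname{order}(S)$. Hence the odd piece is a strictly smaller admissible competitor, the desired contradiction, and $S'$ has the required form.
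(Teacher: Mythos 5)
Your proposal is correct and follows essentially the same route as the paper: choose $S'\subseteq S$ odd, of order at most that of $S$, and of minimum size; then split at a repeated vertex or at a non-consecutive adjacent/opposite pair, and use the parity of the order to see that exactly one of the two resulting pieces is an odd support set of no larger order but strictly smaller size, contradicting minimality. Your explicit bookkeeping in the edge-closing case (the odd piece is the one whose arc carries an even number of edges, so its order $a+1\le a+b=\operatorname{order}(S')$) is precisely the justification the paper compresses into the parenthetical ``and therefore has order at most the order of $S'$.''
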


\begin{proof}
Let $S'\subseteq S$ be an odd support set of order at most the order
of $S$, and (with respect to these properties) with minimum
size. Assume first that some vertex appears at least twice in
$S'$. Then we can divide $S'$ into two support sets of different
parities. In particular there is an odd support set $S''\subseteq S'$,
of order at most the order of $S'$, and of size less than the size of
$S'$. This contradicts the minimality of $S'$.

Assume now that two non-consecutive vertices $u$ and $v$ of $S'$ are
opposite or adjacent. Again, we can divide $S'$ into two support sets
(where $u$ and $v$ are now consecutive), of order at most the order of
$S'$ plus one. Since $S'$ is odd, the two support sets have
different parities, so one of them is odd (and therefore has order at
most the order of $S'$), which contradicts the minimality of
$S'$.
\end{proof}

The same proof also gives the following similar result, which will be needed
later.

\begin{lemma}\label{lem:shortest2}
Let $G$ be a non-bipartite quadrangulation of $\Pp$ and $S$ an odd
support set of $G$ corresponding to a non-contractible closed walk in $G$. Then $G$
contains an odd cycle with vertex set $S' \subseteq S$.
\end{lemma}

\section{Short odd cycles}
\label{sec:oddcycle}

A key result is the following corollary of a theorem of Lins~\cite{Lin81}
(see also~\cite[Corollary 2.4]{ArcBon97}).

\begin{lemma}\label{lem:lins}
  Let $G=(V,E)$ be a projective planar graph with a shortest non-contractible cycle of
  length $\ell$ and a shortest non-contractible cycle of length $\ell^*$ in its dual
  graph $G^*$. Then $|E|\ge \ell \cdot \ell^*$.
\end{lemma}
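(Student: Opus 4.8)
The plan is to deduce the inequality from the min-max theorem of Lins, which asserts that for any graph embedded in $\Pp$ the maximum number of pairwise edge-disjoint non-contractible cycles equals the minimum number of edges meeting every non-contractible cycle. Once this is granted, the argument is a pure counting argument: I would exhibit a packing of edge-disjoint non-contractible cycles whose number is at least $\ell^*$, observe that each has length at least $\ell$ (since $\ell$ is the edge-width of $G$), and conclude that $|E| \ge \ell\cdot\ell^*$ because the cycles share no edges.

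The crux is therefore to identify the minimum number of edges meeting every non-contractible cycle of $G$ with the dual edge-width $\ell^*$. First I would check that this minimum is at most $\ell^*$. Take a shortest non-contractible cycle $C^*$ in $G^*$, of length $\ell^*$, and let $F$ be the set of $\ell^*$ edges of $G$ dual to the edges of $C^*$. Viewing $C^*$ as a non-contractible closed curve $\gamma$ in $\Pp$, the edges of $F$ are exactly the edges of $G$ crossed by $\gamma$. Any non-contractible cycle $C$ of $G$, regarded as a closed curve, is itself non-contractible, so by the crossing property recalled in \Cref{sec:preliminaries} it meets $\gamma$ an odd, hence nonzero, number of times; each such crossing occurs on an edge of $F$, so $C$ uses an edge of $F$. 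Thus $F$ meets every non-contractible cycle and the minimum is at most $\ell^*$.

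For the reverse inequality I would argue topologically. Let $F$ be any edge set meeting every non-contractible cycle of $G$, and consider the dual edges $F^*$ in $G^*$. Since $F$ hits all non-contractible cycles, the graph $G-F$ has only contractible cycles and hence embeds in the disk; this forces $F^*$ to contain a non-contractible cycle of $G^*$, for otherwise $F^*$ would be confined to a contractible region and one could route a non-contractible cycle of $G$ avoiding $F$, a contradiction. Consequently $|F|=|F^*|\ge\ell^*$, and the minimum equals $\ell^*$.

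Combining the two ingredients, Lins's theorem yields $\ell^*$ pairwise edge-disjoint non-contractible cycles; each has length at least $\ell$, and edge-disjointness gives $|E|\ge\ell\cdot\ell^*$. I expect the main obstacle to be the careful topological justification of the reverse inequality above—formalizing why a hitting set for non-contractible cycles must contain a non-contractible dual cycle—rather than the counting, which is immediate once the packing is in hand.
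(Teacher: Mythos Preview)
The paper does not actually prove this lemma: it simply states it as a known corollary of Lins's theorem and cites \cite{Lin81} and \cite[Corollary~2.4]{ArcBon97}. Your proposal is precisely the standard way to extract the corollary from Lins's min--max equality, so your approach and the paper's are the same in spirit---you are just spelling out what the paper leaves to the references.

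Your argument is correct, including the point you flag as delicate. The only phrasing I would tighten is the reverse inequality: saying that if $F^*$ contains no non-contractible cycle then ``$F^*$ would be confined to a contractible region'' is not literally true (a tree in $G^*$ need not sit in a disk). The clean statement is the equivalence: $F$ meets every non-contractible cycle of $G$ if and only if $F^*$ contains a non-contractible cycle of $G^*$. You already proved the ``if'' direction via the odd-intersection property. For ``only if'', observe that if every cycle of the subgraph $F^*\subseteq G^*$ is contractible, then the inclusion $F^*\hookrightarrow\Pp$ is trivial on $H_1(\Pp;\ZZ_2)$, so $\Pp\setminus F^*$ still carries a non-contractible closed curve $\gamma$; pushing $\gamma$ onto the $1$-skeleton $G$ (it meets only faces and edges of $G$ outside $F$) yields a non-contractible closed walk in $G-F$, hence a non-contractible cycle avoiding $F$, a contradiction. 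With this adjustment your counting argument goes through verbatim and gives $|E|\ge \ell\cdot\ell^*$.
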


We now show that the following result follows from
Lemma~\ref{lem:lins} as a fairly simple consequence.

\begin{theorem}\label{thm:oddcycle}
Let $G$ be a non-bipartite projective quadrangulation on $n$
vertices. Then $G$ contains an odd cycle of length at most $\tfrac12(1+\sqrt{8n-7})$.
\end{theorem}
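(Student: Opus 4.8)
The plan is to combine the edge count of a projective quadrangulation with Lins' inequality (Lemma~\ref{lem:lins}), so that the crux reduces to bounding the dual edge-width $\ell^*$ from below in terms of the primal edge-width $\ell$.

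First I would record the basic counting. Since every face of $G$ is bounded by four edges and every edge lies on two faces, $4|F| = 2|E|$; together with Euler's formula $|V| - |E| + |F| = 1$ for $\Pp$ this forces $|E| = 2n - 2$. Writing $\ell$ for the length of a shortest non-contractible cycle of $G$ and $\ell^*$ for that of $G^*$, Lemma~\ref{lem:lins} gives $\ell \cdot \ell^* \le |E| = 2n - 2$. Because non-contractible is the same as odd in a non-bipartite quadrangulation, $\ell$ is exactly the length of a shortest odd cycle, which is the quantity we want to bound.

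The heart of the argument is the inequality $\ell^* \ge \ell - 1$, equivalently: from a shortest non-contractible cycle of $G^*$ one can extract an odd cycle of $G$ of length at most $\ell^* + 1$. Such a dual cycle corresponds to a non-contractible closed curve $\gamma$ crossing exactly $\ell^*$ edges $e_1, \dots, e_{\ell^*}$ of $G$ and passing through faces $F_1, \dots, F_{\ell^*}$, with consecutive crossed edges lying on a common quadrilateral face. I would follow the endpoints of the $e_i$ along one side of $\gamma$: in each traversed face the two chosen endpoints are joined by the boundary arc lying on the chosen side, and these arcs have length $1$ when the two crossed edges of the face are opposite and length $0$ or $2$ when they are adjacent. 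Since $\gamma$ is one-sided in $\Pp$, after going once around the chosen side has become the other side, so the walk does not close up on itself but ends at the opposite endpoint of $e_1$; adding the single edge $e_1$ closes it into a closed walk homotopic to $\gamma$, hence non-contractible and therefore odd. The arc lengths on the two sides of $\gamma$ sum to $2\ell^*$, so the shorter side has total arc length at most $\ell^*$; choosing it, the resulting odd closed walk has length at most $\ell^* + 1$, and Lemma~\ref{lem:shortest2} produces an odd cycle on a subset of its vertices, of length at most $\ell^* + 1$. This yields $\ell \le \ell^* + 1$.

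Finally I would combine the two inequalities: $\ell(\ell - 1) \le \ell \cdot \ell^* \le 2n - 2$, and solving $\ell^2 - \ell - (2n-2) \le 0$ gives $\ell \le \tfrac12(1 + \sqrt{8n-7})$, as claimed. The main obstacle is precisely the key inequality $\ell^* \ge \ell - 1$: one must verify carefully that following one side of the orientation-reversing curve $\gamma$ and closing with a single edge yields a non-contractible (hence odd) walk, and that passing to the shorter side keeps the length down to $\ell^* + 1$. The naive alternative --- choosing one endpoint per crossed edge and removing the resulting opposite pairs by shifts --- only gives $\ell \le 2\ell^*$ and hence the weaker constant $c = 2$; exploiting the one-sidedness of $\gamma$ to recover the factor $\sqrt2$ is exactly what makes the bound sharp.
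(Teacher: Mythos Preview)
Your proof is correct and follows the same strategy as the paper: Euler's formula gives $|E|=2n-2$, Lins' inequality gives $\ell\cdot\ell^*\le 2n-2$, a shortest dual non-contractible cycle is converted into an odd closed walk in $G$ of length at most $\ell^*+1$, Lemma~\ref{lem:shortest2} extracts an odd cycle, and one solves the quadratic. The only difference is cosmetic, in how the walk of length $\le\ell^*+1$ is built: the paper greedily chooses one endpoint $v_i$ of each crossed edge $e_i$ so that $v_i$ is equal or adjacent to $v_{i-1}$ (giving $\ell^*$ steps of length $0$ or $1$, plus at most one closing edge), whereas you follow the boundary arcs on the shorter side of the one-sided curve and close with a single crossed edge; both routes yield $\ell\le\ell^*+1$.
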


\begin{proof}
  A standard application of Euler's formula shows that $G$ has
  $m=2n-2$ edges. Let
  $\ell$ be the length of a shortest odd cycle in $G$. By
  Lemma~\ref{lem:lins}, we know that the dual graph $G^*$ of $G$
  contains a non-contractible cycle of length $\ell^* \le
  (2n-2)/\ell$. Let $C^*$ be such a cycle, and let
  $(f_1,f_2,\ldots,f_{\ell^*})$ be the faces of $G$ corresponding to the
  vertices of $C^*$. Note that any two consecutive faces $f_i,f_{i+1}$
  in $C^*$ share an edge, which we call $e_i$. For any $1\le i \le
  {\ell^*}+1$, we will choose a vertex $v_i$
  in each edge $e_i$ (where we set $e_{{\ell^*}+1}=e_1$), in a specific way. We start by choosing $v_1$ in
  $e_1$ arbitrarily, and for any $i>1$ we distinguish two cases. If
  $v_{i-1} \in e_i$, then we set $v_i=v_{i-1}$, and otherwise we
  choose for $v_i$ a vertex adjacent to $v_{i-1}$ in $f_i$ (note that
  such a vertex always exists). Let $S$ be the sequence of vertices thus
  chosen (each maximal sequence of consecutive vertices
  $v_i,v_{i+1},\ldots,v_{j}$ such that $v_i=v_{i+1}=\cdots =v_{j}$ is
  reduced to a single vertex $v_i$).
  Any two consecutive vertices are
  adjacent (for $v_{\ell^*+1},v_1$, this follows from the fact
  that they both belong to $e_{{\ell^*}+1}=e_1$), so we obtain a closed walk in $G$ of length at most
  $\ell^*+1$ that is homotopic to $C^*$, and therefore non-contractible.
  It follows that $S$ is an odd support set where any
  two consecutive vertices are adjacent. By Lemma~\ref{lem:shortest2},
  $G$ contains an odd cycle of length at most $\ell^*+1\le
  1+(2n-2)/\ell$. It follows that $\ell^2-\ell \le 2n-2$, so
  $\ell\le \tfrac12(1+\sqrt{8n-7})$, as desired.
\end{proof}

Following~\cite{KawOze13,Yu03}, a \emph{$k$-separation} in a graph $G$ is a pair $(G_1,G_2)$ of
subgraphs of $G$ such that $V(G) = V(G_1) \cup V(G_2)$, $|V(G_1)\cap
V(G_2)|=k$, $E(G_1)\cap E(G_2)=\emptyset$, and $E(G_i)\cup (V(G_i) -
V(G_{3-i}))\neq \emptyset$ for $i=1,2$. A graph $G$ is said to be
\emph{internally $4$-connected} if $G$ is 3-connected and for every
3-separation $(G_1, G_2)$ in $G$, $|V(G_1)|\le 4$ or $|V(G_2)|\le 4$.

The following characterisation of graphs without two vertex-disjoint
odd cycles will play a key role in our proofs. It was first proved by
Lov\'asz using Seymours's characterisation of regular matroids (see~\cite{Sey95}).
A simpler proof was recently given by Kawarabayashi and Ozeki~\cite{KawOze13}.

\begin{theorem}
\label{thm:disjoint-cycle}
  Let $G$ be an internally $4$-connected graph. Then $G$ has no two
  vertex-disjoint odd cycles if and only if $G$ satisfies one of the
  following conditions:
  \begin{enumerate}
    \item $G-v$ is bipartite, for some $v \in V(G)$;
    \item $G-\{e_1,e_2,e_3\}$ is bipartite for some edges $e_1, e_2, e_3 \in E(G)$
      such that $e_1, e_2, e_3$ form a triangle;
    \item $|V(G)| \leq 5$;
    \item $G$ can be embedded into the projective plane so that every face boundary
      has even length.
  \end{enumerate}
\end{theorem}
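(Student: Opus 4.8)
\emph{Sufficiency.} The two directions are of very different character, so I would treat them separately, and begin with the (elementary) implication that each of the four conditions forbids two vertex-disjoint odd cycles. If $G-v$ is bipartite then every odd cycle passes through $v$, so no two are disjoint. If $G-\{e_1,e_2,e_3\}$ is bipartite with $e_1,e_2,e_3$ a triangle on vertices $\{a,b,c\}$, then fixing a proper $2$-colouring of $G-\{e_1,e_2,e_3\}$, a cycle of $G$ is odd precisely when it uses an odd number of the edges $e_1,e_2,e_3$; since each such edge has both ends in $\{a,b,c\}$, every odd cycle contains at least two of $a,b,c$, and two vertex-disjoint odd cycles would then need at least four distinct vertices of the triangle, a contradiction. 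If $|V(G)|\le 5$ there is no room for two disjoint odd cycles, which together span at least six vertices. Finally, if $G$ embeds in $\Pp$ with all faces of even length, then every contractible cycle bounds a union of faces and hence has even length, so every odd cycle is non-contractible; as any two non-contractible simple closed curves in $\Pp$ intersect, no two odd cycles can be vertex-disjoint.

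\emph{Necessity.} This is the substantial direction, and the one requiring real machinery; my plan would follow Lov\'asz's matroidal route. I would encode parity by viewing $G$ as the all-negative signed graph, so that odd cycles are exactly the negative cycles, and pass to the binary \emph{even-cycle matroid} $M$ on ground set $E(G)$, represented over $GF(2)$ by the vertex--edge incidence matrix together with one extra all-ones row recording the signature. A direct inspection of the null space shows that the circuits of $M$ are the even cycles together with the (edge) unions of two odd cycles meeting in at most one vertex; in particular a pair of vertex-disjoint odd cycles is precisely a circuit of $M$ of a prescribed type. Thus the hypothesis becomes the purely matroidal statement that $M$ has no such circuit.

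The heart of the argument is then to classify the matroids $M$ subject to this constraint. Here my plan is to bring Seymour's decomposition theorem for regular matroids to bear, exactly as Lov\'asz did. One first checks that the forbidden circuit, absent by hypothesis, is what rules out the small non-regular configurations, so that after reductions the matroids in question lie within the scope of Seymour's theorem: they are built by $1$-, $2$- and $3$-sums from graphic matroids, cographic matroids, and copies of $R_{10}$. The internal $4$-connectivity of $G$ is what makes this usable, since it forbids the nontrivial low-order separations underlying the sums; hence $G$ cannot be a proper sum and must reduce to a single indecomposable block. Translating the surviving blocks back to graph language should yield exactly the four outcomes: the small blocks give $|V(G)|\le 5$; signatures that can be concentrated at a single vertex or on a triangle give conditions (1) and (2); and the cographic (planar-dual) block gives an embedding of $G$ in $\Pp$ in which every face boundary is even, that is, condition (4).

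The main obstacle, and the place where essentially all the difficulty resides, is this final translation. One must verify that the forbidden circuit really does force membership in Seymour's regular class (the matroid $M$ is not a priori regular, so this step is genuinely delicate), and then that the cographic block refines into the topological conclusion (4), with the projective embedding reconstructed from the dual description and no further cases surviving. Because of this, I would keep in reserve the alternative, purely graph-theoretic argument of Kawarabayashi and Ozeki, which avoids matroids altogether: assuming (1)--(3) fail, one exploits internal $4$-connectivity together with Menger-type arguments to locate an odd cycle forced to be non-contractible in any drawing, and then to assemble the even-faced projective embedding directly. In either route, producing the embedding in case (4) is the crux, and everything else is organisational bookkeeping.
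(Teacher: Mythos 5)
The paper does not prove this statement at all: it is Lov\'asz's Two Disjoint Odd Cycles Theorem, imported as a black box with pointers to Seymour's survey and to the Kawarabayashi--Ozeki proof. So there is no internal proof to compare yours against, and the real question is whether your write-up stands on its own. Your sufficiency direction does: all four cases are handled correctly. (One small inaccuracy in case (2): relative to a fixed $2$-colouring of $G-\{e_1,e_2,e_3\}$, a cycle's parity is governed by how many \emph{monochromatic} edges among $e_1,e_2,e_3$ it uses, not simply by how many of the three it uses; but the only consequence you need --- that every odd cycle uses at least one triangle edge and hence at least two of the three triangle vertices --- is correct as stated.)

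The necessity direction, however, is a plan rather than a proof, and the two steps you yourself flag as ``the crux'' are precisely the entire content of the theorem. First, you do not establish that the even-cycle matroid of the all-negative signed graph falls within the scope of Seymour's decomposition theorem; as you note, this matroid is not a priori regular, and showing that the absence of the forbidden circuit (a pair of disjoint odd cycles) forces regularity --- or forces the appropriate excluded-minor structure --- is where Lov\'asz's argument actually lives. Second, you do not carry out the translation from the surviving indecomposable block to the even-faced projective embedding of outcome (4), nor verify that internal $4$-connectivity really kills all the $1$-, $2$- and $3$-sums in the way required. Deferring to Kawarabayashi--Ozeki at this point is legitimate as a citation, but then the argument is a citation, not a proof: what you have written reduces the statement to itself. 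Since the paper also only cites this result, your treatment is no worse than the paper's, but it should be presented honestly as ``easy direction proved, hard direction quoted from the literature'' rather than as a proof sketch of the hard direction.
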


In order to extend Theorem~\ref{thm:oddcycle} to
$4$-chromatic graphs without two vertex-disjoint odd cycles, we will
need the following technical lemma about precoloring
extension in bipartite graphs.

\begin{lemma}\label{lem:ext}
Let $G$ be a bipartite graph and $X$ be a subset of $V(G)$ of size
at most $3$. Then any (proper) precoloring of $X$
extends to a $3$-coloring of $G$, unless
\begin{enumerate}[{\normalfont (i)}]
\item $X =\{x,y,z\}$ for distinct $x,y,z$,
\item $x$, $y$, and $z$ are on the same side of the bipartition of $G$,
\item in the precoloring, $x$, $y$ and $z$ have pairwise different colors, and
\item any pair of vertices in $\{x,y,z\}$ have a common neighbor in $G$.
\end{enumerate}
\end{lemma}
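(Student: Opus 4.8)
The plan is to prove the contrapositive-style statement: assume that the precoloring of $X$ does \emph{not} extend to a proper $3$-coloring of $G$, and deduce that conditions (i)--(iv) all hold. Since $G$ is bipartite with parts $A$ and $B$, I first fix a ``default'' extension strategy and then show that it can only fail in the very restricted situation described. The natural default is to use a proper $2$-coloring of $G$ with colors $\{1,2\}$, and then try to repair it on the side where precolored vertices sit, using the third color $3$ only to resolve conflicts between the precoloring and the $2$-coloring.

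\medskip

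First I would dispose of the cases $|X|\le 2$. If $X=\emptyset$ there is nothing to prove; if $|X|=1$, say $X=\{x\}$, take any proper $2$-coloring of $G$ and permute colors so that $x$ receives its prescribed color. If $|X|=2$, say $X=\{x,y\}$: when $x$ and $y$ lie on opposite sides of the bipartition, a $2$-coloring already separates them, so we may relabel to match their precolors (if the precolors happen to be equal, we can still use the third color on one of them without creating a conflict, since each has all its neighbors on the opposite side). When $x,y$ lie on the same side, any extension giving them distinct colors is fine, and if they share a color we can again recolor one of them with color $3$. In every sub-case a valid extension exists, so failure forces $|X|=3$, which is exactly condition (i).

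\medskip

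The heart of the argument is the case $X=\{x,y,z\}$. The key reduction is this: start from a proper $2$-coloring $c\colon V(G)\to\{1,2\}$ respecting the bipartition, and observe that color $3$ is ``free'' in the sense that any vertex may be recolored $3$ provided none of its neighbors is already colored $3$. I would argue that if some two of $x,y,z$ lie on opposite sides of the bipartition, or if two of them receive the same precolor, then a valid extension exists: one can color each precolored vertex as prescribed and extend by using $\{1,2\}$ on the rest according to the bipartition, recoloring the few offending vertices with $3$ one at a time, checking at each step that their neighbors (all on the opposite side) are untouched. Carrying this out shows that failure forces all three vertices on the same side (condition (ii)) with pairwise distinct precolors (condition (iii)). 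Finally, with $x,y,z$ on side $A$ and precolored $1,2,3$ respectively, an extension fails only if color $3$ cannot be assigned to $z$ without conflict and no relabeling helps; this pins down the obstruction to the existence of a vertex in $B$ adjacent to (some pair among) $x,y,z$, yielding condition (iv). The main obstacle I anticipate is the bookkeeping in this last step: one must verify that the common-neighbor condition is genuinely \emph{necessary} for failure, i.e.\ that whenever some pair of $\{x,y,z\}$ lacks a common neighbor, the three colors can be redistributed (possibly using color $3$ on a vertex of $B$) to realize the precoloring. This requires a careful case analysis on which pairs among $x,y,z$ share neighbors, but each case reduces to recoloring a bounded neighborhood, so no heavy machinery is needed.
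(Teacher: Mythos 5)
Your high-level strategy (prove the contrapositive, build the extension directly from the bipartition $A,B$) is the same as the paper's, and your treatment of the easy cases is fine in substance: the paper compresses all of them into one observation, namely that unless $X$ consists of three vertices on one side with three distinct colors, the precolored vertices in $A$ (say) use at most two colors, so one can color $A\setminus X$ with a suitable one of those two colors and all of $B\setminus X$ with the remaining third color. That single construction disposes of (i)--(iii) at once and is cleaner than recoloring ``offending vertices one at a time,'' but your version can be made to work.

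The genuine gap is in the only substantive step, the necessity of (iv), which you explicitly defer as ``bookkeeping'' and propose to handle by ``recoloring a bounded neighborhood.'' That framing does not work as described: if $x,y,z\in A$ are precolored $1,2,3$ and, say, $x$ and $y$ have no common neighbor, the required recoloring is not a local repair but a specific \emph{global} coloring of the whole graph. The paper's construction is: give every vertex of $A\setminus X$ color $3$, give every vertex of $N(x)\cap B$ color $2$, and give every other vertex of $B$ color $1$. This is proper precisely because no neighbor of $y$ lies in $N(x)$ (so no neighbor of $y$ receives color $2$), and $z$'s color $3$ only clashes with $A$, where it causes no edge conflict since $A$ is independent. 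Note also that $N(x)\cap B$ need not be ``bounded,'' and a greedy one-vertex-at-a-time repair starting from a $2$-coloring can cascade (recoloring $N(y)$ forces recoloring its $A$-neighbors, etc.), so without exhibiting this explicit coloring the argument is incomplete. Supplying that one construction closes the gap and essentially reproduces the paper's proof.
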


\begin{proof}
Let $A,B$ be the bipartition of $G$. By symmetry, we can assume that
$|A\cap X | \ge |B\cap X|$. In particular, $|B\cap X|\le 1$. If in the precoloring of $X$, $A$
contains at most two different colors, then we color each vertex of $A-X$ with one of these two
colors (one that does not appear in $B\cap X$), and each vertex of $B-X$ with the third color. This yields a
(proper) 3-coloring of $G$. Otherwise, by
symmetry, $X=\{x,y,z\}\subseteq A$ and $x,y,z$ have distinct colors; this
proves conditions (i)--(iii) of the lemma.

Assume that $x$ and $y$ have no common neighbor
in $G$ (and thus in $B$), and $x,y,z$ are colored $1,2,3$
respectively. Then we color each vertex of $A-X$ with color 3, each
neighbor of $x$ with color 2, and the remaining vertices of $B$ with
color 1. Since $x$ and $y$ have no common neighbor, this is a proper
3-coloring of $G$ extending the precoloring of $X$.
\end{proof}

A $k$-chromatic graph is said to be \emph{$k$-vertex-critical} if for any vertex $v$,
$G-v$ is $(k-1)$-colorable. A graph is \emph{projective} if it can be embedded in $\Pp$.
We will need the following direct consequence of a result of Gimbel and
Thomassen~\cite[Theorem 5.4]{GimTho97}:

\begin{lemma}\label{lem:GT}
Let $G$ be a simple triangle-free projective graph. If $G$ is $4$-vertex-critical,
then $G$ is a (non-bipartite) projective quadrangulation.
\end{lemma}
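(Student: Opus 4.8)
The plan is to derive the lemma from the Gimbel--Thomassen characterisation by a short extremal argument. First I would unpack the hypotheses: since $G$ is $4$-vertex-critical it is $4$-chromatic, and in particular not $3$-colourable; it is also connected, and indeed $2$-connected, since a $k$-vertex-critical graph has no cut vertex for $k\ge 3$. As $G$ is moreover simple, triangle-free and projective, \cite[Theorem~5.4]{GimTho97} applies and yields a subgraph $Q\subseteq G$ that is a non-bipartite projective quadrangulation. The whole point is then to show that this $Q$ is in fact all of $G$.

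The second step is to show that $Q$ is \emph{spanning}, i.e.\ $V(Q)=V(G)$. Here I would use the fact, due to Youngs~\cite{You96} and recalled in the introduction, that every non-bipartite projective quadrangulation is $4$-chromatic, so $\chi(Q)=4$. If some vertex $v$ of $G$ were not in $Q$, then $Q\subseteq G-v$ would force $\chi(G-v)\ge 4$, contradicting the $4$-vertex-criticality of $G$, which guarantees $\chi(G-v)=3$. Hence $V(Q)=V(G)$; write $n=|V(G)|$.

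The final, and most delicate, step is to show $E(Q)=E(G)$, from which $G=Q$ and the lemma follow. I would argue by edge counting. On one hand, applying Euler's formula to the quadrangulation $Q$ exactly as in the proof of Theorem~\ref{thm:oddcycle} gives $|E(Q)|=2n-2$. On the other hand, $G$ is a simple triangle-free graph embedded in $\Pp$, and since it is $2$-connected each face of any embedding is bounded by a cycle of length at least $4$; Euler's formula then yields the extremal bound $|E(G)|\le 2n-2$. Since $E(Q)\subseteq E(G)$ and $Q$ is spanning, we get $2n-2=|E(Q)|\le|E(G)|\le 2n-2$, forcing $E(G)=E(Q)$. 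Therefore $G=Q$ is a non-bipartite projective quadrangulation, as claimed.

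I expect the main obstacle to be precisely this last equality. The subtlety is that the embedding of $G$ need not restrict to the quadrangulation embedding of $Q$, so one cannot simply argue that an extra edge would be a face-diagonal creating a triangle. The clean way around this is the extremal count above, and the hypothesis that must be handled with care is that the bound $|E(G)|\le 2n-2$ relies on all faces having length at least $4$; this is why I first record that a $4$-vertex-critical graph is $2$-connected, so that its faces are cycles, ruling out the short face boundaries that bridges or cut vertices would otherwise allow.
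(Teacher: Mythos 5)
Your proof is correct and coincides with the paper's for the first two steps: both invoke Gimbel--Thomassen \cite[Theorem 5.4]{GimTho97} to find a non-bipartite projective quadrangulation $Q\subseteq G$, and both use $\chi(Q)=4$ (Youngs) together with $4$-vertex-criticality to conclude that $Q$ is spanning. You diverge at the last step. The paper rules out an edge $e\in E(G)\setminus E(Q)$ by arguing that its endpoints are cofacial in some projective embedding of $Q$, so that adding $e$ creates a triangle or a parallel edge; you instead close with the extremal count $|E(Q)|=2n-2\ge |E(G)|$ for simple triangle-free projective graphs. The two arguments are close cousins: the paper's cofaciality step is only sound because the same Euler computation forces \emph{every} projective embedding of $Q$ (in particular the one induced by an embedding of $Q+e$) to have all faces of length exactly $4$, a point the paper leaves implicit and that you correctly identify as the delicate spot. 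Your version makes the extremal content explicit and avoids discussing which embedding of $Q$ the edge $e$ is added to, which is arguably cleaner. One small inaccuracy in your justification of $|E(G)|\le 2n-2$: in the projective plane a face of a $2$-connected graph need not be bounded by a cycle (a non-contractible cycle bounds a single face whose boundary walk traverses it twice), so the correct statement is that in a cellular embedding of a simple bridgeless triangle-free graph every face boundary \emph{walk} has length at least $4$, which is all the Euler count needs (and the non-cellular case degenerates to a planar embedding, where $|E(G)|\le 2n-4$). This is the standard girth-$4$ edge bound and does not affect the validity of your argument.
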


\begin{proof}
  Since $G$ is a $4$-chromatic triangle-free projective graph, $G$ contains
  a non-bipartite projective quadrangulation $H$ as a subgraph by a result
  of Gimbel and Thomassen~\cite[Theorem 5.4]{GimTho97}. Since $H$ is itself
  $4$-chromatic and $G$ is vertex-critical, $H$ must be a spanning
  subgraph of $G$. If $H$ is a proper subgraph of $G$, then there is an edge
  $e \in E(G) \setminus E(H)$. Both end vertices of $e$ must lie on the
  boundary of the same face in some embedding of $H$ in $\Pp$, so
  adding $e$ to $H$ creates a triangle or a pair of parallel edges, contradicting
  the hypothesis of the lemma. Therefore $G=H$, so $G$ is a non-bipartite
  projective quadrangulation.
\end{proof}

We now extend Theorem~\ref{thm:oddcycle} to
$4$-chromatic graphs without two vertex-disjoint odd cycles.

\begin{proof}[Proof of Theorem~\ref{thm:oddcycle2}]
  We prove the result by induction on $n$.  We can assume that $G$ is
  $4$-vertex-critical. In
  particular, $G$ is connected and does not contain a clique cutset (a
  clique whose removal disconnects the graph). We may assume that $G$
  has at least six vertices (otherwise $G$ has four or five vertices
  and contains a triangle and the result clearly holds). As a
  consequence, we may also assume that $G$ is triangle-free, since
  otherwise $G$ has an odd cycle of length
  $3\le \tfrac12(1+\sqrt{8n-7})$.

\smallskip

Assume first that $G$ is internally $4$-connected. In this case we can
apply Theorem~\ref{thm:disjoint-cycle}. As $G$ is $4$-chromatic, triangle-free, and
has at least six vertices, none of cases (i)--(iii) applies. It follows
that $G$ can be embedded into the projective plane. Since $G$ is
  $4$-vertex-critical and triangle-free, by Lemma~\ref{lem:GT} it
  is a non-bipartite projective quadrangulation and the result follows
  directly from Theorem~\ref{thm:oddcycle}.

  \smallskip

  Assume now that $G$ is not internally $4$-connected. Since $G$ has no
  clique-cutset, it is $2$-connected. Hence there exist graphs
  $G_1=(V_1,E_1)$ and $G_2=(V_2,E_2)$, and sets $X_i \in V_i$ of two or
  three vertices ($i=1,2$) with $|X_1|=|X_2|$, such that $G_1[X_1]$ and
  $G_2[X_2]$ are equal (as labelled graphs), and $G$ can be obtained
  from $G_1,G_2$ by idenfying $X_1$ in $G_1$ and $X_2$ in $G_2$ (call
  $X$ the corresponding set of vertices in $G$, inducing the same
  graph as $G_1[X_1]$ and $G_2[X_2]$). Moreover, if
  $|X_1|=|X_2|=3$, then for $i=1,2$, $V_i - X_i$ contains at least two
  vertices (this follows from the definition of internal
  $4$-connectivity). Note that since $G$ is triangle-free, $X$ is
  bipartite. In what follows, by a slight abuse of notation, we give
  the same name to a vertex of $X_1$, the vertex of $X_2$ it is
  indentified with, and the resulting vertex of $X$.

 Assume that $|X|=2$, say $X=\{x,y\}$. Since
$G$ has no clique-cutset, $x$ and $y$ are non-adjacent. If $G$
contains an odd cycle disjoint from $X$ (say in $G_2- X_2$), then since $G$ has no two
vertex-disjoint odd cycles, $G_1$ is bipartite. Since $G$ is
$4$-vertex-critical, $G_2$ is 3-colorable and by Lemma~\ref{lem:ext} any
$3$-coloring of $G_2$ extends to $G_1$, a contradiction. It follows that every odd cycle of $G$ intersects
$X$, so $G-X$ is bipartite. As $X$ is a stable set, $G$ is
$3$-colorable, which is a contradiction.

We can now assume that $|X|=3$, say $X=\{x,y,z\}$. Assume first that
$X$ is a stable set. If all odd cycles of $G$ intersect $X$, then
$G-X$ is bipartite and since $X$ is stable, $G$ is $3$-colorable, a
contradiction. Otherwise $G$ contains an odd cycle disjoint from $X$
(say in $G_2- X_2$). Then $G_1$ is bipartite, and by
Lemma~\ref{lem:ext}, $x,y,z$ are on the same side of the bipartition of
$G_1$, and in each $3$-coloring of $G_2$ they have three distinct
colors. Moreover, each pair of vertices among $x,y,z$ has a common
neighbor in $G_1$. Let $H$ be the graph obtained from $G_2$ by adding
a vertex $v$ adjacent to $x,y,z$. Since $G_1-X_1$ contains at least
two vertices, $H$ has less vertices than $G$. Note that $H$ is
$4$-chromatic (since for any $3$-coloring of $H-v$, the neighbors of $v$
have three distinct colors), and has no two vertex-disjoint odd cycles:
each odd cycle $C$ of $H$ is either disjoint from $v$, and is
therefore an odd cycle of $G$, or intersects $X$ in two vertices, say
$x$ and $y$, and corresponds to an odd cycle of $G$ coinciding with
$C$ in $G_2$ and whose intersection with $G_1-X$ is a single common
neighbor of $x$ and $y$ in $G_1$ (which is known to exist). By the
induction hypothesis, $H$ (and therefore $G$) has an odd cycle of
length at most $\tfrac12(1+\sqrt{8n-7})$.

Now assume that $X$ is not a stable set.
Since $G$ is triangle-free, we can assume that $G[X]$ has two
non-adjacent vertices, say $y,z$, while $x$ is adjacent to at least one
of them, say $y$. By Lemma~\ref{lem:ext}, none of $G_1,G_2$ is
bipartite and in particular, every odd cycle intersects $X$. Note that
$G-\{y,z\}$ is not bipartite (since otherwise $G$ would be
$3$-colorable), so we can assume that $G_2$ has an odd cycle $C_2$
containing $x$ and avoiding $y,z$. Therefore every odd cycle of
$G_1$ intersects $x$, so $G_1-x$ is bipartite, say with bipartition
$A,B$. Take a $3$-coloring $c$ of $G_2$, and assume without loss of
generality that $x$ and $y$ have colors
$1$ and $2$, respectively. If $y,z$ are both in $A$ and $x,y,z$ do not have pairwise distinct
colors, then $c$ easily extends to a $3$-coloring of $G_1$. Similarly,
if $y,z$ have distinct colors and are in distinct partite sets, then
$c$ easily extends to a $3$-coloring of $G_1$. So we can assume that
either
\begin{enumerate}
\item $y,z \in A$, $y$ has color $2$, and $z$ has color $3$, or
\item $y\in A$, $z\in B$, and $y,z$ are both colored $2$.
\end{enumerate}
Let $B_x$ be the set of
neighbors of $x$ in $B$.  We color $A-y$ with color $3$, $B_x$ with
color $2$, and $B-(B_x\cup X)$ with color $1$. Since $G$ is triangle-free
there are no edges between $y$ and $B_x$, so the resulting $3$-coloring
of $G$ is proper, which contradicts the fact that $G$ is
$4$-chromatic. This concludes the proof of Theorem~\ref{thm:oddcycle2}.
\end{proof}

\section{Small odd cycle transversals}
\label{sec:OCT}

A  (multi)graph $G$ embedded in a surface $\Sigma$ is \emph{minimal of
  face-width $k$} if the face-width of $G$ is $k$, while for any edge
$e$ of $G$, the face-width of $G/e$ (the (multi)graph embedded in $\Sigma$
obtained from $G$ be contracting $e$) and the face-width of $G-e$ are
less than $k$.

We will use the following result of Randby~\cite{Ran97}:

\begin{theorem}\label{thm:ran97}
For any integer $k$, if a multigraph embedded in $\Pp$ is minimal of
  face-width $k$, then it contains exactly $2k^2-k$ edges.
\end{theorem}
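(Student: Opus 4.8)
The plan is to read off the edge count from the structure that the two minimality conditions force, after first making the key observation that it is the number of edges, and not the number of vertices or faces, that is the genuine invariant here: for a given $k$ one expects several combinatorially distinct minimal embeddings (for small $k$ one can already see examples with different vertex and face counts), so the proof cannot proceed by pinning down $V$ and $F$ and must instead target $E$ directly. The signpost for the whole argument is the identity $2k^2-k=\binom{2k}{2}$, which suggests that the embedding should produce, canonically, a set of $2k$ ``slots'' with the edges in bijection with the unordered pairs of slots.

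First I would fix a \emph{tight} essential curve $\gamma$, meeting $G$ in exactly $k$ vertices $v_1,\dots,v_k$, and cut $\Pp$ open along $\gamma$. Since cutting the projective plane along a (one-sided) non-contractible simple closed curve yields a disk whose boundary double-covers the curve, the graph becomes a plane graph $G'$ in a disk $D$ whose boundary circle meets $G'$ in the $2k$ vertices $v_1,\dots,v_k,v_1,\dots,v_k$, with the two copies of each $v_i$ lying antipodally on $\partial D$. Essential curves crossing $\gamma$ exactly once correspond precisely to arcs of $D$ joining a boundary point to its antipode, so the face-width hypothesis implies that every such antipodal arc meets $G'$ in at least $k$ vertices, while minimality says that no edge can be deleted or contracted without creating an antipodal arc that meets $G'$ in fewer than $k$ vertices.

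Next I would use minimality to rigidify $G'$. Deletion-minimality forces every edge to lie on some tight antipodal arc (one meeting $G'$ in exactly $k$ vertices), so there are no redundant edges; in particular the minimum degree is at least $2$ and there are no pendant or bigon configurations. Contraction-minimality forces, dually, that the tight arcs admit no local shortening, which pins down how they foliate the disk: they behave like the diameters of a projective grid, and together with the $2k$ boundary slots they organize $G'$ into a rigid grid-like pattern. From this pattern every edge is charged to a unique pair of boundary slots, yielding the bijection with the $\binom{2k}{2}$ pairs; equivalently, once the local degrees and face sizes are read off the grid, Euler's formula $V-E+F=1$ solves directly for $E=2k^2-k$.

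The main obstacle is exactly this rigidification: proving that the two minimality conditions force the tight antipodal arcs into a grid with no slack, so that the resulting edge count is independent of the particular minimal embedding (which may genuinely differ in $V$ and $F$). This step requires a careful topological case analysis—one must rule out configurations in which the tight arcs interleave or cross in ways that would permit an edge deletion or contraction without lowering the face-width, and show that only the grid-like configurations survive. Once that rigidity is established, the count $E=\binom{2k}{2}=2k^2-k$ follows either from the slot-pairing bijection or from the Euler-formula computation, uniformly in $k$.
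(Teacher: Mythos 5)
This statement is not proved in the paper at all: it is Randby's theorem, quoted verbatim from~\cite{Ran97} and used as a black box in the proof of Theorem~\ref{thm:OCT-upper}. So the only question is whether your sketch actually constitutes a proof, and it does not. The entire content of Randby's theorem is the ``rigidification'' step that you explicitly set aside as ``the main obstacle'': showing that deletion\nobreakdash-minimality and contraction\nobreakdash-minimality together force the tight antipodal arcs into a configuration from which the edge count can be read off. You assert that only grid\nobreakdash-like configurations survive and that this ``requires a careful topological case analysis,'' but you do not perform any of that analysis; what remains is a correct restatement of the target identity $2k^2-k=\binom{2k}{2}$ together with a plausible but unsubstantiated guess at a mechanism. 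This is the gap, and it is not a small one --- Randby's argument occupies a full paper and proceeds by showing that every minimal embedding of face-width $k$ is obtainable from a fixed canonical example by $\Delta Y$- and $Y\Delta$-exchanges, which preserve the number of edges while changing $V$ and $F$; the invariance of $E$ then follows from invariance under these moves plus a single computation on the canonical example, not from a global edge-to-slot bijection.

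Two further points. First, your proposed bijection between edges and unordered pairs of the $2k$ boundary slots is unlikely to exist edge-by-edge: since distinct minimal embeddings differ by $\Delta Y$-exchanges, individual edges have no canonical labels even though their total number is invariant (your own correct observation that $V$ and $F$ vary already hints that any ``canonical'' local structure must be move-invariant, which a slot-pairing of edges is not). Second, the reduction of the face-width condition to antipodal arcs in the cut-open disk needs more care than you give it: an essential closed curve in $\Pp$ crosses $\gamma$ an odd number of times, not necessarily once, so ``every antipodal arc meets $G'$ in at least $k$ vertices'' is necessary but not obviously sufficient for face-width at least $k$, and this sufficiency is exactly what you would need when certifying that a given deletion or contraction drops the face-width.
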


\begin{proof}[Proof of Theorem~\ref{thm:OCT-upper}]
Let $G$ be a non-bipartite quadrangulation on $n$ vertices of the
projective plane $\Pp$, and let $k$ be the face-width of $G$.  By
Euler's formula, $G$ has $m=2n-2$ edges. If $G$ is not minimal with
face-width $k$, we delete or contract edges of $G$ until we obtain a
(multi)graph $H$ that is minimal with face-width $k$. Note that $H$
has at most $m=2n-2$ edges by construction, and exactly $2k^2-k$ edges
by Theorem~\ref{thm:ran97}. It follows that $2k^2-k\le 2n-2$ and so
$k\le \tfrac14+\sqrt{n-\tfrac{15}{16}}$, as desired.
\end{proof}

We believe that Theorem~\ref{thm:OCT-upper} can be extended to
$4$-chromatic graphs with no two vertex-disjoint odd cycles, in the same
way Theorem~\ref{thm:oddcycle2} extends
Theorem~\ref{thm:oddcycle}. However, we have only been able to prove
that $4$-vertex-critical graphs with no two vertex-disjoint odd cycles satisfy
the result.

\begin{theorem}
\label{thm:OCT-upper2}
  Let $G$ be a $4$-vertex-critical graph on $n$ vertices without two
  vertex-disjoint odd cycles. Then $G$ has an odd cycle transversal of
  cardinality at most $\tfrac14+\sqrt{n-\tfrac{15}{16}}$.
\end{theorem}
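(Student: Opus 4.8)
The plan is to argue by induction on $n$, following the architecture of the proof of \Cref{thm:oddcycle2} but tracking an odd cycle transversal instead of a short odd cycle. Since $G$ is $4$-vertex-critical it is connected, has no clique cutset, and is therefore $2$-connected; the finitely many graphs with $n$ small are checked by hand (for those the required transversal has size at most $2$). The main dichotomy is whether or not $G$ is internally $4$-connected. When it is, I would apply \Cref{thm:disjoint-cycle} and treat its four outcomes, the substantial one being (iv), which I would handle by essentially repeating the Randby computation of \Cref{thm:OCT-upper}.

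In the internally $4$-connected case, outcome (i) cannot occur, since $G-v$ bipartite would force $\chi(G)\le 3$; outcome (iii) is a base case; and outcome (ii) produces a triangle $\{a,b,c\}$ for which $G-\{a,b,c\}$ is bipartite (any odd cycle avoiding the triangle would be vertex-disjoint from it), giving a transversal of size $3$, which is within the bound once $n\ge 9$. For outcome (iv), $G$ embeds in $\Pp$ with all faces of even length. Cutting along a shortest non-contractible curve meeting $G$ in a set $S$ of $k=$ face-width vertices leaves a plane graph all of whose bounded faces are even, hence bipartite, so $G$ has an odd cycle transversal of size at most its face-width $k$. Euler's formula with all faces even gives $|E(G)|\le 2n-2$, and reducing $G$ to a minimal face-width-$k$ multigraph and applying \Cref{thm:ran97} yields $2k^2-k\le |E(G)|\le 2n-2$, whence $k\le\tfrac14+\sqrt{n-\tfrac{15}{16}}$. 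Note that only the inequality $|E(G)|\le 2n-2$ is needed, so the presence of triangles causes no difficulty and \Cref{lem:GT} is not required here.

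When $G$ is not internally $4$-connected it admits a $2$- or $3$-separation $(G_1,G_2)$ with interface $X$, and I would reuse verbatim the case analysis from the proof of \Cref{thm:oddcycle2}: using \Cref{lem:ext} together with $4$-vertex-criticality, the $2$-separation, the $3$-separation with an edge inside $X$, and the $3$-separation with $X$ stable but meeting every odd cycle all force $G$ to be $3$-colorable, a contradiction. The one surviving configuration is a $3$-separation with $X=\{x,y,z\}$ stable, with (say) $G_1$ bipartite having $x,y,z$ in a single class and pairwise common neighbors, and with every $3$-coloring of $G_2$ assigning $x,y,z$ distinct colors. As there, I form $H=G_2+v$ with $v$ adjacent to $x,y,z$; it has fewer than $n$ vertices and no two vertex-disjoint odd cycles. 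The key extra point is that $H$ is again $4$-vertex-critical: $H-v=G_2$ is $3$-colorable; for an interior vertex $u$ a $3$-coloring of $G-u$ restricts to one of $G_2-u$ in which $x,y,z$ are not all distinctly colored (such a restriction is part of a proper coloring of all of $G_1$, so by \Cref{lem:ext} it is not of the exceptional type, and since the same-side and common-neighbor conditions hold by construction, two of $x,y,z$ must share a color), allowing $v$ to be colored; and for $u\in X$ the vertex $v$ has only two neighbors and is trivially colorable. Thus the induction hypothesis applies to $H$.

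It remains to transport an odd cycle transversal $S_H$ of $H$, of size at most $\tfrac14+\sqrt{|V(H)|-\tfrac{15}{16}}\le\tfrac14+\sqrt{n-\tfrac{15}{16}}$, back to $G$. If $v\notin S_H$ then the surviving vertices of $X$ all lie in one class of $H-S_H$, and since they also lie in one class of the bipartite graph $G_1$, the two bipartitions glue consistently and $S_H$ is already a transversal of $G$. If $v\in S_H$, put $S'=S_H\setminus\{v\}$, so that $G_2-S'$ is bipartite; the surviving vertices of $X$ either all fall in one class of $G_2-S'$ (and $S'$ works) or split as $2{+}1$, in which case adding the lone vertex to $S'$ restores a consistent gluing. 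Either way $G$ acquires a transversal of size at most $|S_H|$. The main obstacle is precisely this last exchange together with the verification that $H$ stays $4$-vertex-critical: it is here that vertex-criticality is used essentially, which explains why the argument is confined to $4$-vertex-critical graphs and does not extend to all $4$-chromatic graphs as in \Cref{thm:oddcycle2}.
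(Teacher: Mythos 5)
Your proof is correct, but in the non-internally-$4$-connected case it takes a genuinely different (and arguably cleaner) route than the paper. The paper does \emph{not} pass to $H=G_2+v$: instead it keeps the three pairwise common neighbours $y_1,y_2,y_3\in V(G_1)$ of the pairs in $X$, replaces only $V_1\setminus(X\cup Y)$ by an apex $z$ on $Y$ (using $4$-vertex-criticality to exclude a single common neighbour of all of $x_1,x_2,x_3$ and to force $|V(G_1)|\ge 7$), and must then treat separately the degenerate case $|V(G_1)|=7$, where $G$ is reassembled as a projective quadrangulation via \Cref{thm:disjoint-cycle} and \Cref{lem:GT} so that \Cref{thm:OCT-upper} applies directly (Figures~\ref{fig:7vg} and~\ref{fig:expl}). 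Your explicit transport argument makes that detour unnecessary: since $E(G)=E(G_1)\sqcup E(G_2)$ and $S_H\subseteq V(G_2)\cup\{v\}$, a proper $2$-colouring of $G_2-T$ in which the surviving vertices of $X$ are monochromatic (achievable after deleting at most one further vertex of $X$, by pigeonhole among three vertices in a $2$-colouring) glues with the bipartition of $G_1$, in which $X$ lies on one side, to a $2$-colouring of $G-T$; and your check that $H$ is again $4$-vertex-critical --- applying the contrapositive of \Cref{lem:ext} to the restriction to $X$ of a $3$-colouring of $G-u$, which by definition extends over $G_1$ --- is sound, and correctly isolates the one place where vertex-criticality is essential. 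In the internally $4$-connected case you also bypass \Cref{lem:GT} by running the Randby computation on the even-faced embedding itself; the one point you should make explicit is that for a general even-faced embedding (not just a quadrangulation) every odd cycle is non-contractible, since the contractible cycles are generated over $\ZZ_2$ by the (even) face boundaries --- your phrase about the bounded faces of the cut-open graph is not literally accurate, but this standard fact gives that $G-S$ is bipartite for $S$ realising the face-width, and $2|E|\ge 4f$ with Euler's formula still yields $|E|\le 2n-2$, so $2k^2-k\le 2n-2$ as claimed. Net effect: you trade the paper's structural analysis of the $7$-vertex gadget for a short gluing argument, at the modest cost of a computer-free but still finite check of the small cases.
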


\begin{proof}
  The proof proceeds by induction on $n$.  We can assume that $G$ has
  at least nine vertices (by checking small $4$-vertex-critical graphs, for
  instance using~\cite{CGSZ15}) and thus has no odd cycle transversal on
  at most three vertices. Since every odd cycle is an odd cycle
  transversal in $G$, in particular this
  implies that we can assume that $G$ is triangle-free. If $G$ is
  not internally $4$-connected then there exist graphs $G_1=(V_1,E_1)$
  and $G_2=(V_2,E_2)$, and sets $X_i \subseteq V_i$ with at most three vertices
  ($i=1,2$) with $|X_1|=|X_2|$, such that $G_1[X_1]$ and $G_2[X_2]$
  are equal (as labelled graphs), and $G$ can be obtained from
  $G_1,G_2$ by identifying $X_1$ in $G_1$ and $X_2$ in $G_2$ (let $X$
  be the corresponding set of vertices in $G$, inducing the same graph as
  $G_1[X_1]$ and $G_2[X_2]$). Moreover, $G_1,G_2$ have the property
  that if $|X_1|=|X_2|=3$, then for $i=1,2$, $V_i - X_i$ contains at
  least two vertices.

If every odd cycle of $G$ intersects $X$, then $X$ is an odd cycle
transversal of size at most 3, which is a contradiction. It follows
that $G$ contains an odd cycle disjoint from $X$ (say in
$G_2-X_2$). Since any two odd cycles intersect, $G_1$ is bipartite.
Recall that $G$ is $4$-vertex-critical,
so $G_2$ is 3-colorable, and since no 3-coloring of $G_2$ extends to
$G_1$ (otherwise $G$ would be 3-colorable), by
Lemma~\ref{lem:ext}, we have (i) $X =\{x_1,x_2,x_3\}$ for distinct
$x_1,x_2,x_3$, (ii) $x_1$, $x_2$, and $x_3$ are on the same side of
the bipartition of $G_1$, (iii) in any 3-coloring of $G_2$, $x_1$,
$x_2$ and $x_3$ have pairwise different colors, and (iv) any pair of
vertices in $\{x_1,x_2,x_3\}$ have a common neighbor in $G_1$. If
there is a vertex $y$ in $G_1$, adjacent to each of $x_1,x_2,x_3$,
then the graph obtained from $G$ by removing any vertex of $V_1-X_1$
distinct from $y$ is $4$-chromatic, which contradicts the fact that
$G$ is $4$-vertex-critical (since $G_1-X_1$ contains at least
two vertices). It follows that no vertex of $G_1$ is adjacent to each
of $x_1,x_2,x_3$. So there is a set $Y=\{y_1,y_2,y_3\}$ of three vertices in $G_1$, such
that $y_1$ is adjacent to $x_2$ and $x_3$, $y_2$ is adjacent to $x_1$
and $x_3$, and $y_3$ is adjacent to $x_1$ and $x_2$. Since a
$4$-vertex-critical graph has minimum degree at least 3, $G_1$ contains at
least seven vertices. If $G_1$ has at least eight vertices, then remove
from $G$ all the vertices of $V_1-(X\cup Y)$, and add a vertex $z$
adjacent to $y_1,y_2,y_3$. The resulting graph $H$
is $4$-vertex-critical, has no two vertex-disjoint odd cycles, and is smaller
than $G$. By the induction hypothesis, $H$ has an odd cycle transversal $T$ with at
most $\tfrac14+\sqrt{n-\tfrac{15}{16}}$ vertices. Note that
$z$ does not appear in a minimum odd cycle transversal of $H$, so we
can assume that $z\not\in T$. It is easy to check that $T$ is
also an odd cycle transversal of $G$.

\smallskip

\begin{figure}[htbp]
\centering \includegraphics[scale=1]{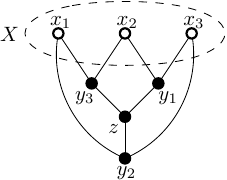}
\caption{A 7-vertex bipartite graph.} \label{fig:7vg}
\end{figure}

By the above paragraph, we can assume that for any decomposition of
$G$ into $G_1$ and $G_2$ as above, on some set $X$ of at most three
vertices, $G_1$ induces the graph on seven vertices in Figure~\ref{fig:7vg}. In
particular, $X$ induces a stable set of size $3$. Moreover, it is not hard to
check that no vertex cutset of $G$ of size at most $3$ intersects $G_1-X$,
otherwise $G$ would contain a vertex cutset of size at most $2$. It
follows that $G$ can be constructed from some graph $G_0$ and a family
$t_1,t_2,\ldots,t_k$ of triples of vertices of $G_0$ by pasting the $X$-part
of a copy $G_i$ of the graph of Figure~\ref{fig:7vg} onto each triple $t_i$;
see Figure~\ref{fig:expl}, top left. Let $H$ be the graph obtained from
$G_0$ by adding, for each $1\le i \le k$, a vertex $z_i$ adjacent to
the vertices of $t_i$; see Figure~\ref{fig:expl},
bottom left. Observe that $H$ is $4$-vertex-critical (otherwise $G$
would not be $4$-vertex-critical), triangle-free (since otherwise $G$
would also contain a triangle) and any two odd cycles
intersect. We claim that $H$ is also internally $4$-connected. To see
this, consider a $3$-separation in $H$ on some vertex cutset $X$, such
that the two parts of the separation contain at least 5 vertices. If $X$
does not intersect any vertex $z_i$, then it is also a vertex cutset
in $G$, and one of the two sides of the corresponding separation in
$G$ is isomorphic the graph of Figure~\ref{fig:7vg}. By the definition
of $H$, this implies that
in $H$, the corresponding side of the separation contains 4 vertices,
which is a contradiction. So we can assume that $X$ contains some
vertex $z_i$. Note that one of the three neighbors of $z_i$ in $H$
(call it $u_i$) is
alone in its part of the separation corresponding to $X$, so by
replacing $z_i$ by $u_i$ in $X$, we obtain a new vertex cutset of
size 3 in $H$. By replacing in this way all the vertices $z_i$ of $X$
by original vertices of $G$, we obtain a new vertex cutset $X'$ of
size 3 in $H$, which is also a vertex cutset in $G$. By definition,
one of the two parts of the corresponding separation in $G$ is the
graph of Figure~\ref{fig:7vg}, which in $H$ corresponds to a vertex
with three pairwise non-adjacent neighbors. By construction, one of
these neighbors has degree two in $H$, which contradicts the fact that
$H$ is 4-vertex-critical, and concludes the proof that $H$ is
internally $4$-connected.

By Theorem~\ref{thm:disjoint-cycle}, $H$ has an embedding
in the projective plane, and by Lemma~\ref{lem:GT}, $H$ is a non-bipartite
quadrangulation of the projective plane. We now replace each $z_i$ by
$G_i-X$ (see Figure~\ref{fig:expl}, right) and observe that $G$ is itself a quadrangulation
of the projective plane. By Theorem~\ref{thm:OCT-upper}, $G$ has an
odd cycle transversal with at most $\tfrac14+\sqrt{n-\tfrac{15}{16}}$ vertices, which concludes the proof.
\end{proof}

\begin{figure}[htbp]
\centering \includegraphics[scale=1]{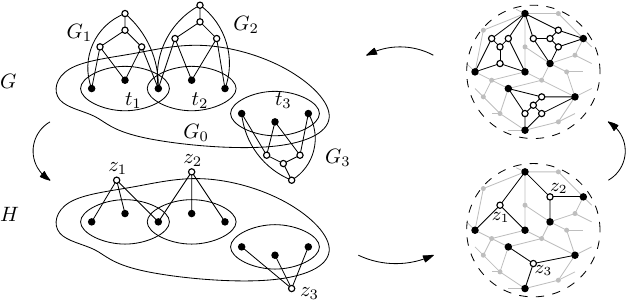}
\caption{Graphs $G$, $H$, and their representations as projective quadrangulations.} \label{fig:expl}
\end{figure}


We now prove that the bound in Theorem~\ref{thm:OCT-upper} is at most $\tfrac14$
away from the optimum for any value of $n$, and is sharp for
infinitely many values of $n$.

\medskip

For a positive integer $k$, let $[k]$ denote the set
$\{0,\ldots,k-1\}$. Let $P_k$ be a path with vertex set $[k]$, with
vertices in the increasing order along $P_k$.
For $k\geq 2$, we define $G_k$ as the graph obtained from the
Cartesian product $P_k \Box P_k$ by adding the edges joining $(0,j)$
to $(k-1,k-j-1)$, and those joining $(j,0)$ to $(k-j-1,k-1)$. The
graphs $G_k$ embed as quadrangulations in $\Pp$; see \Cref{fig:grids}
for embeddings of $G_2$, $G_3$ and $G_4$ in $\Pp$.

\begin{figure}[ht]
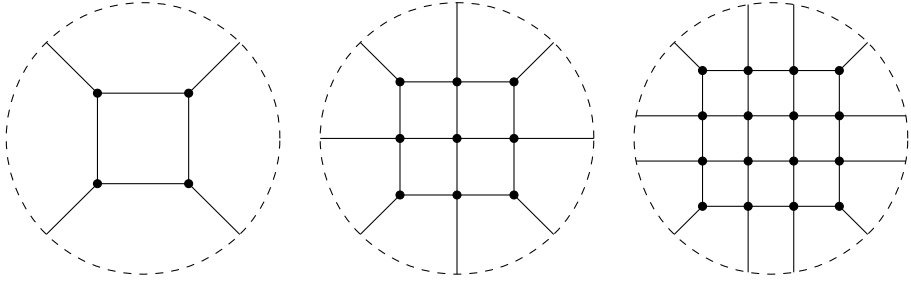

\centering
\begin{tikzgraph}[scale=1.2,thin]
\def\r{1.5}
\def\k{0.5}
\draw[dashed] (0,0) circle (\r);

\foreach\i in {1,...,4}
{
  \path (45+90*\i:\r) coordinate (c\i);
}
\draw (-\k,\k)--(-\k,-\k)
      (-\k,-\k)--(\k,-\k)
      (\k,-\k)--(\k,\k)
      (\k,\k)--(-\k,\k)
      (c1)--(-\k,\k)
      (c2)--(-\k,-\k)
      (c3)--(\k,-\k)
      (c4)--(\k,\k);

\foreach \i in {-1,1}
{
  \foreach \j in {-1,1}
  {
    \draw ({\i/2},{\j/2}) node[vertex] {};
  }
}
\end{tikzgraph}
\hfil
\begin{tikzgraph}[scale=0.75,thin]
\def\r{2.4}
\def\k{1}
\def\l{0}
\draw[dashed] (0,0) circle (\r);

\foreach\i in {-\k,...,\k}
{
  \path (\i,{sqrt(\r^2-abs(\i^2))}) coordinate (n\i)
        (\i,-{sqrt(\r^2-abs(\i^2))}) coordinate (s\i)
        (-{sqrt(\r^2-abs(\i^2))},\i) coordinate (e\i)
        ({sqrt(\r^2-abs(\i^2))},\i) coordinate (w\i);
}
\foreach\i in {1,...,4}
{
  \path (45+90*\i:\r) coordinate (c\i);
}
\foreach\i in {0}
{
  \draw (n\i)--(s\i)
        (e\i)--(w\i);
}
\draw (-\k,\k)--(-\k,-\k)
      (-\k,-\k)--(\k,-\k)
      (\k,-\k)--(\k,\k)
      (\k,\k)--(-\k,\k)
      (c1)--(-\k,\k)
      (c2)--(-\k,-\k)
      (c3)--(\k,-\k)
      (c4)--(\k,\k);

\foreach \i in {-\k,...,\k}
{
  \foreach \j in {-\k,...,\k}
  {
    \draw (\i,\j) node[vertex] {};
  }
}
\end{tikzgraph}
\hfil
\begin{tikzgraph}[scale=0.6,thin]
\def\r{3}
\def\k{1.5}
\draw[dashed] (0,0) circle (\r);

\foreach\i in {-3,-1,1,3}
{
  \path ({\i/2},{sqrt(\r^2-abs((\i/2)^2))}) coordinate (n\i)
        ({\i/2},-{sqrt(\r^2-abs((\i/2)^2))}) coordinate (s\i)
        (-{sqrt(\r^2-abs((\i/2)^2))},{\i/2}) coordinate (e\i)
        ({sqrt(\r^2-abs((\i/2)^2))},{\i/2}) coordinate (w\i);
}
\foreach\i in {1,...,4}
{
  \path (45+90*\i:\r) coordinate (c\i);
}
\foreach\i in {-1,1}
{
  \draw (n\i)--(s\i)
        (e\i)--(w\i);
}
\draw (-\k,\k)--(-\k,-\k)
      (-\k,-\k)--(\k,-\k)
      (\k,-\k)--(\k,\k)
      (\k,\k)--(-\k,\k)
      (c1)--(-\k,\k)
      (c2)--(-\k,-\k)
      (c3)--(\k,-\k)
      (c4)--(\k,\k);

\foreach \i in {-3,-1,1,3}
{
  \foreach \j in {-3,-1,1,3}
  {
    \draw ({\i/2},{\j/2}) node[vertex] {};
  }
}
\end{tikzgraph}
\caption{The graphs $G_2$, $G_3$ and $G_4$ embedded in the projective plane $\Pp$.}
\label{fig:grids}
\end{figure}

\begin{proof}[Proof of \Cref{thm:OCT-lower}]
The argument is quite similar to the argument given in~\cite{Ree99}
for \emph{Escher walls}, a related construction of Lov\'asz and
Schrijver. Assume for the sake of contradiction that $G_k$ has an odd
cycle transversal $T$ of size at most $k-1$. Then for some $\ell \in
[k]$, $T$ is disjoint from the $\ell$-th row $R_\ell$ of $G_k$ (all
the vertices $(i,\ell)$, with $i \in [k]$). Consider, for each $i \in
[k]$, the set of vertices $L_i=\{(i,j)\,|\,j\in [\ell]\}\cup
\{(k-i-1,k-j-1)\,|\,j\in [k-\ell-1]\}$. Note that the sets $L_i$, $i
\in [k]$, are vertex-disjoint, so $T$ is disjoint from one of them, say
$L_m$. It follows that the set of vertices $C=L_m \cup \{(i,\ell)\,|\,
m \le i \le k-m-1\}\subseteq L_m \cup R_\ell$, is disjoint from $T$.
Observe that $C$ induces an odd cycle, which contradicts the fact
that $T$ was an odd cycle transversal.
\end{proof}

\Cref{thm:OCT-upper} immediately implies the following almost optimal lower bound on
the independence number of projective quadrangulations, which may be
new.

\begin{corollary}
  Let $G$ be a non-bipartite projective quadrangulation on $n$
  vertices. Then $\alpha(G) \geq \left\lceil\tfrac12(n-\tfrac14-\sqrt{n-15/16}) \right\rceil$.
  Moreover, the graph $G_k$ satisfies
  $\alpha(G_k) = \frac n2-\frac{\sqrt{n}}2$.
\end{corollary}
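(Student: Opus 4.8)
The plan is to prove the two assertions separately. The first (general) inequality is immediate from \Cref{thm:OCT-upper}: it provides an odd cycle transversal $T$ with $|T|\le \tfrac14+\sqrt{n-\tfrac{15}{16}}$, and since $G-T$ is bipartite on $n-|T|$ vertices, its larger side is an independent set of $G$ of size at least $\tfrac12(n-|T|)\ge \tfrac12\bigl(n-\tfrac14-\sqrt{n-\tfrac{15}{16}}\bigr)$. So the real work lies in evaluating $\alpha(G_k)$, where $n=k^2$ and the target value is $\tfrac{k^2-k}2=\binom k2$.

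For the lower bound $\alpha(G_k)\ge\tfrac{k^2-k}2$ I would again invoke \Cref{thm:OCT-upper}: applied to $G_k$ it yields an odd cycle transversal of size at most $\tfrac14+\sqrt{k^2-\tfrac{15}{16}}<k+1$, hence at most $k$ since $\sqrt{k^2-\tfrac{15}{16}}<k$ and the transversal size is an integer. Deleting it leaves a bipartite graph on at least $k^2-k$ vertices, and (as $k^2-k$ is even) its larger side is an independent set of size at least $\tfrac{k^2-k}2$. This is consistent with \Cref{thm:OCT-lower}, which shows that the face-width of $G_k$ is in fact exactly $k$.

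The heart of the matter is the matching upper bound $\alpha(G_k)\le\tfrac{k^2-k}2$, and here I would first discard the ``horizontal'' wrap-around edges. Writing $G_k'$ for the subgraph of $G_k$ consisting of the grid $P_k\Box P_k$ together with only the edges joining $(j,0)$ to $(k-j-1,k-1)$, we have $G_k'\subseteq G_k$ and therefore $\alpha(G_k)\le\alpha(G_k')$; it thus suffices to prove $\alpha(G_k')\le\tfrac{k^2-k}2$. The graph $G_k'$ is a $k\times k$ grid whose bottom and top rows are glued by one flipped matching, i.e.\ a twisted cylinder. An independent set of $G_k'$ is precisely a choice of a set $S_j\subseteq[k]$ of columns in each row $j$ such that no $S_j$ contains two consecutive columns, consecutive rows satisfy $S_j\cap S_{j+1}=\emptyset$, and the twist imposes $S_0\cap\{\,k-1-i:i\in S_{k-1}\,\}=\emptyset$; the claim is that $\sum_j|S_j|\le\tfrac{k^2-k}2$. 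I expect verifying this combinatorial optimisation in general to be the main obstacle: note that every covering of $V(G_k')$ by edges and (at most one, since no two odd cycles are disjoint) odd cycle only gives $\alpha\le\tfrac{k^2-1}2$, so the extra saving of $\tfrac{k-1}2$ is a genuinely global, odd-structure effect.

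The cleanest route I see to this optimisation is through the orientation double cover. The double cover of $G_k'$ is the untwisted cylinder $\hat G=C_{2k}\Box P_k$ with deck transformation $\tau(r,c)=(r+k,\,k-1-c)$; since $C_{2k}$ is an even cycle, $\hat G$ is bipartite with colour classes $A',B'$, and one checks directly that $\tau$ is fixed-point-free and reverses the colouring. An independent set $I$ of $G_k'$ lifts to a $\tau$-invariant independent set $\hat I$ of $\hat G$ with $|\hat I|=2|I|$; being a union of $\tau$-orbits, $\hat I$ meets $A'$ and $B'$ equally, so it is a \emph{balanced} independent set. Hence it remains to show that every balanced independent set of $C_{2k}\Box P_k$ has size at most $k^2-k$; equivalently, that every $S\subseteq A'$ with $|S|>\tfrac{k^2-k}2$ satisfies $|N(S)|+|S|>k^2$. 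This is a vertex-isoperimetric inequality on the cylinder grid, which I would establish by a compression or transfer-matrix argument. Its extremal configuration is exactly the lift of the size-$\tfrac{k^2-k}2$ set produced in the lower bound, so the inequality is tight; combined with the previous paragraph this gives $\alpha(G_k)=\tfrac{k^2-k}2=\tfrac n2-\tfrac{\sqrt n}2$.
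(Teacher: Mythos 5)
Your first inequality and your lower bound $\alpha(G_k)\ge\tfrac12(k^2-k)$ are correct (the paper gets the lower bound more directly by exhibiting the diagonal $\{(i,i)\,:\,i\in[k]\}$ as an explicit odd cycle transversal of size $k$, but your route through \Cref{thm:OCT-upper} works). The gap is in the upper bound $\alpha(G_k)\le\tfrac12(k^2-k)$, which you yourself call ``the heart of the matter'' and which you do not actually prove. Your reduction to the twisted cylinder $G_k'$ and its bipartite double cover $C_{2k}\Box P_k$ is sound as far as it goes (the lift of an independent set is indeed a balanced independent set of twice the size), but it terminates in the assertion that every balanced independent set of $C_{2k}\Box P_k$ has size at most $k^2-k$ --- equivalently, a vertex-expansion inequality $|S|+|N(S)|>k^2$ for every $S\subseteq A'$ with $|S|>\tfrac12(k^2-k)$ --- which you propose to ``establish by a compression or transfer-matrix argument'' without carrying either out. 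Harper-type compressions on a cylinder are delicate, and this step is at least as hard as the statement you started from; moreover, by discarding the edges joining $(0,j)$ to $(k-1,k-j-1)$ you have committed yourself to the strictly stronger bound $\alpha(G_k')\le\tfrac12(k^2-k)$, whereas it is precisely \emph{both} families of wrap-around edges that the correct argument exploits.

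The paper's proof of the upper bound is elementary and uses no covering space. Pair the rows as $K_i=R_i\cup R_{k-1-i}$; each $K_i$ is a cycle of length $2k$ (via the wrap edges $(0,i)(k-1,k-1-i)$ and $(0,k-1-i)(k-1,i)$), so a stable set $S$ meets each $K_i$ in at most $k$ vertices, and if $|S|>\tfrac12(k^2-k)$ then $|S\cap K_i|=k$ for some $i$. A size-$k$ independent set in a $2k$-cycle is one of its two alternating classes, which (for $k$ even) forces $(j,i)\in S\iff(j,k-1-i)\in S$ together with an alternation along the row. Running the identical argument on the column pairs $C_j\cup C_{k-1-j}$ forces the incompatible relation $(j,i)\in S\iff(j,k-1-i)\notin S$, a contradiction. (For $k$ odd the middle row and column are themselves odd cycles of length $k$, carrying at most $\tfrac12(k-1)$ vertices of $S$, and the count goes through the same way.) You should either supply a complete proof of your isoperimetric inequality on $C_{2k}\Box P_k$ or replace the last step by a double-counting argument of this kind.
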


\begin{proof}
  By \Cref{thm:OCT-upper} $G$ has an odd cycle transversal $S$ such
  that $|S|\le \tfrac14+\sqrt{n-15/16}$. The graph $G-S$ is bipartite
  on more than $n-\tfrac14-\sqrt{n-15/16}$ vertices, so at least one
  color class of $G-S$ has more than
  $\tfrac12\left(n-\tfrac14-\sqrt{n-15/16}\right)$ vertices.
  
  We now focus on $G_k$. Since it contains an odd cycle transversal
  $T$ of $k=\sqrt{n}$ vertices (for instance, take $T=\{(i,i)\,| i
  \in [k]\}$), it also contains a stable set of size
  $\tfrac12(n-\sqrt{n})$. We now prove that this bound is tight. For
  the sake of contradiction, assume that there is a stable set $S$
  of size more than $\frac n2-\frac{\sqrt{n}}2=\frac12
  (k^2-k)$. Assume first that $k$ is even, and for $0\le i \le k/2$,
  let $K_i=R_i \cup R_{k-1-i}$. Recall that the $\ell$-th row
  $R_\ell$ of $G_k$ consists of the vertices $(i,\ell)$, with $i \in
  [k]$. Observe that each $K_i$ contains a cycle of length $2k$, and
  since $S$ is a stable set, $|S \cap K_i| \le k$. If $|S\cap K_i| \le k-1$
  for every $i\in [k/2]$, then $S$ contains at most
  $\tfrac{k}2(k-1)=\frac12 (k^2-k)$ vertices, which is a contradiction.
  Therefore $|S \cap K_i| = k$ for some index $i\in [k/2]$. As $(0,i)$
  and $(k-1,k-1-i)$ are adjacent it follows that for each $j\in [k]$,
  $(j,i)\in S$ if and only if $(j,k-1-i) \in S$. Since $k$ is even,
  we also have that for each $j\in [k]$, $(j,i)\in S$ if and only if
  $(k-1-j,i)\not\in S$.

  Let $C_\ell$ be the $\ell$-th column of $G_k$, i.e., the vertices
  $(\ell,j)$ with $j \in [k]$. By the same argument as above, there
  exists an index $j$ such that $|S\cap L_j|=k$, where $L_j=C_j \cup C_{k-1-j}$.
  As before, we have $(j,i)\in S$ if and only if $(j,k-1-i) \not\in S$,
  which contradicts the previous paragraph.

  The proof of the case when $k$ is odd is quite similar and we therefore
  omit it. The only difference is that in this case the middle row
  $R_{\lfloor k/2\rfloor}$ and the middle column $C_{\lfloor k/2\rfloor}$ each
  induce a cycle on $k$ vertices, which therefore contains at most
  $\tfrac12(k-1)$ vertices of $S$.
\end{proof}

Again, it can be checked that for any integer $k\ge 2$,
$$\left\lceil\tfrac12(k^2-\tfrac14-\sqrt{k^2-15/16})\right\rceil=\frac{k^2}2-\frac{k}2,$$ so this shows that
our lower bound on $\alpha(G)$ is sharp for infinitely many values of $n$.

\section{Almost independent odd cycle transversals}\label{sec:jap}

Recall that by Theorem~\ref{thm:oddcycle}, every
non-bipartite projective quadrangulation contains an odd cycle of
length $O(\sqrt{n})$. This odd cycle is also an odd cycle
transversal, and by increasing its size by at most one, we can even make
sure that it induces a \emph{proper} subgraph of an odd cycle, i.e.,
a union of paths (in particular, a bipartite graph). Therefore,
every non-bipartite projective quadrangulation can be properly colored with colors
$1,2,3,4$ in such a way that only $O(\sqrt{n})$ vertices are colored 1
or 2.

Nakamoto and Ozeki~\cite{NO17} have asked
whether the $4$-coloring might be chosen in such a way that one color class has
size $1$ and another has size $o(n)$. We now give an
affirmative answer to their question for graphs with sublinear maximum
degree.

We start with a lemma, whose proof is quite similar to that of
Theorem~\ref{thm:oddcycle}. In what follows, the neighborhood of a vertex $v$ is denoted by $N(v)$.

\begin{lemma}\label{lem:neighborhood}
Let $G=(V,E)$ be a non-bipartite quadrangulation of $\Pp$ and $S$
an odd support set of $G$. Then there is a subset
$T \subseteq \bigcup_{v \in S} N(v)$
such that $G[T]$ contains a single edge, and $G-T$ is
bipartite.
\end{lemma}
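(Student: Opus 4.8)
The plan is to reduce the whole statement to the construction of a single auxiliary object: an odd support set $T_0\subseteq\bigcup_{v\in S}N(v)$ whose \emph{order} is exactly one. Suppose such a $T_0$ is in hand. Applying Lemma~\ref{lem:shortest} to $T_0$ yields an odd support set $T\subseteq T_0$ whose order is odd and at most the order of $T_0$, which is $1$; hence the order of $T$ equals $1$, the vertices of $T$ are pairwise distinct, and two of them are adjacent or opposite only when they are consecutive. Thus $G[T]$ has no edge between non-consecutive vertices, and among the consecutive pairs exactly one (the order) is adjacent, so $G[T]$ is a single edge, while Lemma~\ref{lem:parity} gives that $G-T$ is bipartite. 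Since $T\subseteq T_0\subseteq\bigcup_{v\in S}N(v)$, this is precisely the conclusion, so everything rests on building $T_0$.

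To build $T_0$ I would first pass to (the vertex set of) an odd cycle; the representative case, and the one I describe, is $S=V(C)$ for an odd cycle $C=c_0c_1\cdots c_{r-1}c_0$ with $r$ odd. Here each $c_i$ lies in $N(c_{i-1})\subseteq\bigcup_{v\in S}N(v)$, and every neighbour of a $c_i$ lies in $\bigcup_{v\in S}N(v)$ as well, so it suffices to keep $T_0$ inside $V(C)\cup N(V(C))$. The main tool is the orientation double cover $\pi\colon\Sp\to\Pp$, with free involution $\sigma$: it lifts $G$ to a quadrangulation $\tilde G$ of the sphere, which is bipartite (all faces are quadrilaterals and $\Sp$ is simply connected) with a proper $2$-colouring $\chi$. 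Because $G$ is \emph{non}-bipartite, $\chi$ does not descend, i.e. $\chi\circ\sigma=1-\chi$. Opposite vertices lie at distance two through a common face and so receive the same colour, whereas adjacent vertices receive different colours; hence along any walk, opposite steps preserve $\chi$ and adjacent steps flip it. Finally, since $C$ is non-contractible it lifts to a \emph{path} $\tilde c_0\tilde c_1\cdots\tilde c_r$ with $\tilde c_r=\sigma\tilde c_0$.

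Now comes the construction. Fix an orientation of $\Sp$, and walk from $\tilde c_0$ to $\tilde c_{r-1}$ by, for each odd index $2j+1$, passing from $\tilde c_{2j}$ to $\tilde c_{2j+2}$ around $\tilde c_{2j+1}$ on the left: the neighbours of $\tilde c_{2j+1}$ met in rotational order between the two incident cycle edges form a fan, and consecutive fan neighbours are opposite (they are the two non-$\tilde c_{2j+1}$ corners of a common quadrilateral, and are non-adjacent because $G$ is triangle-free). This entire initial portion therefore uses only opposite steps and stays in $N(V(C))$. I then close up with the single adjacent step from $\tilde c_{r-1}$ to $\tilde c_r=\sigma\tilde c_0$, which is an edge of $C$. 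Projecting by $\pi$ gives a closed walk $T_0$ in $G$, contained in $V(C)\cup N(V(C))$, in which every consecutive pair is opposite except the last, which is adjacent; hence $T_0$ is a support set of order exactly one. (Consistently, $\chi$ is constant along all the opposite steps and flips once at the end, matching $\chi(\sigma\tilde c_0)=1-\chi(\tilde c_0)$; the same computation shows a purely opposite closed walk cannot encircle $C$, so at least one adjacent step is unavoidable.) This $T_0$ feeds the reduction above and would complete the proof.

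The step I expect to be the crux is the control of this single adjacent ``seam''. The one-sidedness of $C$ is exactly what forbids a purely opposite, and hence contractible, closed walk from encircling $C$, and the double cover is the device that both makes ``the left side'' globally coherent along the lift and, via $\chi\circ\sigma=1-\chi$, certifies that the number of adjacent steps is odd; the explicit skip-by-two routing then pins this number to exactly one. Two points need care: the use of triangle-freeness (harmless after setting aside the finitely many small quadrangulations, where the edge-width is $3$ and the statement is immediate), and the passage from a general odd support set $S$ to the cycle case, which follows the same diagonal push-off but requires additional bookkeeping at those consecutive pairs of $S$ that are opposite rather than adjacent.
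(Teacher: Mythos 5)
Your proposal follows essentially the same route as the paper: reduce the statement to producing an order-$1$ odd support set inside $\bigcup_{v\in S}N(v)$ by replacing alternate vertices of the odd walk with fans of rotationally consecutive (hence pairwise opposite) neighbours, leaving a single adjacent seam, and then finish with Lemma~\ref{lem:shortest} and Lemma~\ref{lem:parity} exactly as the paper does. The orientation double cover is extra machinery the paper does without (it chains the fans directly, and for a general odd support set rather than only the cycle case you write out), and your parenthetical claim that triangles occur only in finitely many small quadrangulations is false (one can grow non-bipartite quadrangulations of $\Pp$ containing $K_4$ arbitrarily large), but the paper's own proof silently makes the same assumption that consecutive fan neighbours are non-adjacent, so this is not a gap relative to the paper's argument.
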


\begin{proof}
Recall that the order of a support set $S$ of $G$ is the number of
pairs of consecutive vertices of $S$ that are adjacent in $G$.
We prove that there is a support set of order $1$ that is included in
$\bigcup_{i=1}^s N(v_i)$.

This support set can be obtained as follows. Choose a pair $v_i,v_{i+1}$ of
consecutive vertices of $S$  that are adjacent, and let
$v_j,v_{j+1}$ be the next pair of consecutive vertices that are
adjacent (possibly, $j=i+1$). Then all pairs of consecutive vertices
$v_k,v_{k+1}$, with $i<k<j$, are opposite. For each vertex $v_k$, $i<k\le j$,
let $s_k$ be a sequence of consecutive neighbors of $v_k$, in their circular order
around $v_k$, such that $s_{i+1}$ starts with $v_i$, $s_j$ ends with
$v_{j+1}$, and for any $i<k< j$, the last vertex of $s_k$ and the
first vertex of $s_{k+1}$ coincide (see Figure~\ref{fig:neighborhood}). We then replace the
sequence $v_i,v_{i+1},\ldots,v_{j+1}$ in $S$ by the concatenation of
the sequences $s_k$, for $i<k\le j$ (where the last vertex of each
sequence $s_k$ is identified with the first vertex of $s_{k+1}$). Note that
any two consecutive vertices in this new subsequence are opposite.

\smallskip

\begin{figure}[htbp]
\centering \includegraphics[scale=1]{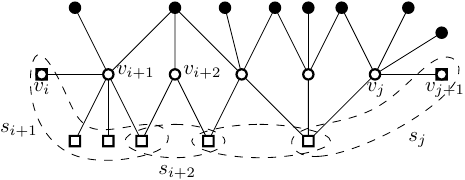}
\caption{Obtaining an odd support set of order 1. Vertices of $S$ are
  depicted with white dots and vertices of $T$ are depicted with white
  squares.} \label{fig:neighborhood}
\end{figure}

The operation above can be performed as long as the support set has
order at least two, and each such operation decreases the order of the
support set by precisely two. Since we started with an odd support set, we can repeat the operation, starting at the next pair (after $v_{j+1}$)
of consecutive vertices that are adjacent, until precisely one such pair
remains. The support set thus obtained has order $1$ (and is therefore odd)
and is a subset of $\bigcup_{v\in S} N(v)$, as desired. By
Lemma~\ref{lem:shortest}, there is a support set $T\subseteq
\bigcup_{v\in S} N(v)$ of order $1$ that is not self-intersecting and such
that there is a unique pair of adjacent vertices in $T$, and these
two vertices are consecutive. It follows that $T$ induces a subgraph
with a unique edge, and by Lemma~\ref{lem:parity}, $G-T$ is
bipartite.
\end{proof}

We now turn to the proof of \Cref{thm:sqrtD}. 

\begin{proof}[Proof of Theorem~\ref{thm:sqrtD}]
  Let $C$ be a shortest odd cycle in $G$, and let $\ell$ be the length of
  $C$. Assume first that $\ell< \sqrt{2n/\Delta}$. Since an odd cycle
  is an odd support set, it follows from Lemma~\ref{lem:neighborhood} that
  the union of the neighborhoods of the vertices of $C$ contain a set
  $T$ of vertices inducing a single edge, and such that $G-T$ is
  bipartite. Since $G$ has maximum degree at most $\Delta$, $T$
  contains at most $\ell \Delta< \sqrt{2n \Delta}$ vertices, as
  desired.

  Assume now that $\ell\ge \sqrt{2n/\Delta}$. Since $G$ has
  $2n-2$ edges, the dual graph $G^*$ of $G$ has a non-contractible cycle $C^*$ of length
  less than $2n/\ell$ by Lemma~\ref{lem:lins}. Let
  $(f_1,f_2,\ldots,f_k)$ be the faces of $G$ corresponding to the
  vertices of $C^*$. Note that any two consecutive faces $f_i,f_{i+1}$ in
  $C^*$ share an edge, call it $e_i$. For any edge $e_i$, we choose
  one of the endpoints $v_i$ of $e_i$ as follows: we start by choosing
  $v_1$ in $e_1$ arbitrarily, and for any $i>1$ we distinguish two
  cases. If $v_{i-1} \in e_i$, then we set $v_i=v_{i-1}$, and otherwise
  we choose for $v_i$ the vertex opposite to $v_{i-1}$ in $f_i$ (note
  that in this case such vertex is necessarily an endpoint of
  $e_i$). Note that $S=(v_i\,|\,1\le i \le
  k)$ is a support set in $G$, and $\rho(S)$ is homologous to
  $C^*$, and thus non-contractible.
  By Lemma~\ref{lem:parity}, $S$ is an odd support set (in
  particular $v_k$ and $v_1$ are adjacent, since all the other pairs of
  consecutive vertices of $S$ are opposite). By
  Lemma~\ref{lem:shortest}, $G$ contains a set $S'$ of less than
  $2n/\ell \le \sqrt{2n \Delta} $ vertices such that $G-S'$ is bipartite and $S'$ induces a
  subgraph of $G$ with a single edge. This concludes the proof of
  Theorem~\ref{thm:sqrtD}.
\end{proof}

Note that a subgraph with a single edge has a proper $2$-coloring such
that one of the color classes is a singleton. It follows that
$n$-vertex projective quadrangulations with
$\Delta=o(n)$ can be $4$-colored in such a way that one color class is a
singleton and another has size $o(n)$.

\section{Conclusion}\label{sec:conclusion}

We have seen that Theorem~\ref{thm:oddcycle2} is sharp for infinitely
many values of $n$. A natural question is whether the generalized
Mycielski graphs are the only extremal graphs.
As for the problem of finding a smallest odd cycle transversal, we
believe that Theorem~\ref{thm:OCT-upper} is not sharp and that the
right bound should be $\sqrt{n}$, which would be tight by
Theorem~\ref{thm:OCT-lower}.

It was proved by Tardif~\cite{Tar01} that the generalized Mycielki graphs have
\emph{fractional} chromatic number $2+o(\tfrac1{n})$, so a natural
question is whether the same holds for projective quadrangulations of
large edge-width. Note that it was proved by Goddyn~\cite{Goddyn}
(see also~\cite{DGMVZ05}), in the same spirit as the result of Youngs~\cite{You96}
on the chromatic number, that the \emph{circular} chromatic number of a projective
quadrangulation is either $2$ or $4$.

\bibliographystyle{plain}
\bibliography{OCT-quad}

\end{document}